\newtheorem{theorem}{Theorem}[section]
\newtheorem{lemma}[theorem]{Lemma}
\newtheorem{proposition}[theorem]{Proposition}
\newtheorem{corollary}[theorem]{Corollary}
\newtheorem{conjecture}[theorem]{Conjecture}
\theoremstyle{definition}
\newtheorem{remark}[theorem]{Remark}
\numberwithin{equation}{section}
\newif\ifshowverify
\newcommand{\Z}{\mathbb{Z}}
\newcommand{\Q}{\mathbb{Q}}
\newcommand{\C}{\mathbb{C}}
\def\imod#1{\allowbreak\mkern5mu({\operator@font mod}\,\,#1)}
\begin{document}

\title[Resurgence, Habiro elements and strange identities]{Resurgence, Habiro elements and strange identities}

\author[S. Crew]{Samuel Crew}

\author[V. Fantini]{Veronica Fantini}

\author[A. Goswami]{Ankush Goswami}

\author[R. Osburn]{Robert Osburn}

\author[C. Wheeler]{Campbell Wheeler}

\address{Department of Mathematics, Imperial College London, South Kensington Campus, London SW7 2AZ, UK} 
\email{samuel.crew24@imperial.ac.uk}

\address{Laboratoire Math{\'e}matique Orsay, Building 307, rue Michel Magat, Facult{\' e} des Sciences d’Orsay, Universit{\'e} Paris-Saclay, F-91405 Orsay Cedex, France}
\email{veronica.fantini@universite-paris-saclay.fr }

\address{School of Mathematical and Statistical Sciences, the University of Texas Rio Grande Valley, 1201 W. University Dr., Edinburg, TX, 78539}
\email{ankush.goswami@utrgv.edu}
\email{ankushgoswami3@gmail.com}

\address{School of Mathematics and Statistics, University College Dublin, Belfield, Dublin 4, Ireland}
\email{robert.osburn@ucd.ie}

\address{Theoretical Sciences Visiting Program, Okinawa Institute of Science and Technology Graduate University, Onna, 904-0495, Japan}

\address{IH\'ES, Le Bois-Marie, 35, route de Chartres, 91440 Bures-sur-Yvette, France}
\email{wheeler@ihes.fr }

\subjclass[2020]{57K16, 11F37, 30B40, 40A05}
\keywords{Resurgence, Habiro ring, quantum topology, strange identities}

\date{\today}

\begin{abstract}
We prove resurgence properties for the Borel transform of a formal power series associated to elements in the Habiro ring that come from radial limits of partial theta series via strange identities. As an application, we prove a conjecture in quantum topology due to Costin and Garoufalidis for two families of torus knots.
\end{abstract}

\maketitle

\section{Introduction}

The modern theory of resurgence began with the voluminous works of {\'E}calle in 1981 \cite{e1, e2} and 1985 \cite{e3}. This theory now plays a vital r{\^o}le in a remarkable number of diverse areas, to name just a few examples: nonlinear systems of ODEs and difference equations \cite{costin1, kamimoto}, algebraic combinatorics \cite{cfmt1, cfmt2}, enumerative combinatorics and quantum field theory \cite{BD, MY}, period integrals and string theory scattering amplitudes \cite{dk}, wall-crossing phenomena \cite{ks} and matrix models \cite{mm}. In this paper, we are interested in a conjecture due to Costin and Garoufalidis \cite{cg} which connects resurgence and quantum topology.  For other such interactions, see \cite{am,ggm,ggm2,ggmw,fw} and the references therein. 

Let us recall the general setup from \cite{cg}. For further details, see \cite{costin2, dp, bm, ms} or \cite[Section 2]{Sauzin} for an excellent concise review of the basic notions of resurgence and alien calculus. A formal power series 
\begin{equation} \label{fps}
\mathcal{F}(x)=\sum_{n=0}^{\infty} a_n x^{-n} \in \mathbb{C}[[1/x]]
\end{equation}
is called Gevrey-1 if there exists positive real numbers $A$, $B$ such that 
\begin{equation*}
| a_n | \leq AB^n n!
\end{equation*}
for all $n \geq 0$. Consider a Gevrey-1 formal power series given by (\ref{fps}) and its Borel transform $\mathcal{B}: \mathbb{C}[[1/x]] \longrightarrow \mathbb{C}[[p]]$ defined by
\begin{equation} \label{borel}
\mathcal{B} \Biggl( \sum_{n=0}^{\infty} a_n x^{-n} \Biggr) = a_0\delta+\sum_{n=1}^{\infty} a_{n} \frac{p^{n-1}}{(n-1)!} =: a_0\delta + G(p)
\end{equation}
where $\delta$ is a formal symbol that has Laplace transform given by the constant function $1$. For simplicity, we write $\mathcal{B}[\mathcal{F}](p)$ for the transform \eqref{borel}. Here, $G(p)$ has a positive radius of convergence as it arose from a Gevrey-1 power series. In a standard abuse of notation, $G(p)$ denotes the formal power series in (\ref{borel}) (and is also commonly referred to in the literature as the Borel transform of $\mathcal{F}(x)$) and the analytic continuation of the associated germ at the origin. In addition, $G(p)$ is called a \textit{resurgent function} if it is endlessly analytically continuable~\cite{ms}. This property means $G(p)$ extends to a (possibly multi-valued) holomorphic function along unbounded paths that need only circumvent a discrete set of singularities. In the present work, we consider the case where $G(p)$ defines a multi-valued function on $\mathbb{C} \setminus \mathcal{N}$, with $\mathcal{N}$ a discrete set of singularities lying on the ray $\mathbb{R}_{>0}$, e.g., $\mathcal{N} = \left \{ \frac{n^2\pi^2}{6}:n\in\Z_{>0} \right \}$.

Assuming $G(p)$ satisfies suitable growth conditions, the left and right Borel resummations of $\mathcal{F}(x)$ are defined as 
\begin{eqnarray*}
S^{+}[\mathcal{F}](x) =\int_{\gamma_l}e^{-px} \mathcal{B}[\mathcal{F}](p) \,dp
\end{eqnarray*}
and
\begin{eqnarray*}
S^{-}[\mathcal{F}](x) =\int_{\gamma_r}e^{-px}  \mathcal{B}[\mathcal{F}](p) \, dp
\end{eqnarray*}
where $x \in \mathbb{C}$ and $\gamma_l$ and $\gamma_r$ are contours in $\mathbb{C} \setminus \mathcal{N}$ from $0$ to $\infty$ that turn left (respectively, right) at each singularity in $\mathcal{N}$ (see Figure~\ref{fig:fig1}). In addition, the left and right Borel resummations give analytic functions on a sector of opening angle $\pi$ in the complex $x$-plane.
\begin{figure}[ht]
    \centering

\tikzset{every picture/.style={line width=0.75pt}} 

\begin{tikzpicture}[x=0.75pt,y=0.75pt,yscale=-1,xscale=1]

\draw [color={rgb, 255:red, 80; green, 227; blue, 194 }  ,draw opacity=1 ]   (100.6,180) -- (170.6,180) ;
\draw [color={rgb, 255:red, 80; green, 227; blue, 194 }  ,draw opacity=1 ]   (60.4,180) .. controls (60.2,155.4) and (99.8,155.4) .. (100.6,180) ;
\draw [color={rgb, 255:red, 80; green, 227; blue, 194 }  ,draw opacity=1 ]   (170.6,180) .. controls (169.8,155.8) and (209.8,155.4) .. (210.8,180) ;
\draw [color={rgb, 255:red, 80; green, 227; blue, 194 }  ,draw opacity=1 ]   (210.8,180) -- (250.6,180.2) ;
\draw [color={rgb, 255:red, 80; green, 227; blue, 194 }  ,draw opacity=1 ]   (250.6,180.2) .. controls (250.2,156.2) and (290.2,156.2) .. (290.8,180.2) ;
\draw [color={rgb, 255:red, 80; green, 227; blue, 194 }  ,draw opacity=1 ]   (290.8,180.2) -- (330.6,180.4) ;
\draw [color={rgb, 255:red, 80; green, 227; blue, 194 }  ,draw opacity=1 ]   (20.6,179.8) -- (60.4,180) ;
\draw [color={rgb, 255:red, 74; green, 144; blue, 226 }  ,draw opacity=1 ]   (21,190) -- (60.8,190.2) ;
\draw [color={rgb, 255:red, 74; green, 144; blue, 226 }  ,draw opacity=1 ]   (101,190) -- (171,190) ;
\draw [color={rgb, 255:red, 74; green, 144; blue, 226 }  ,draw opacity=1 ]   (211.2,190) -- (250.2,190.2) ;
\draw [color={rgb, 255:red, 74; green, 144; blue, 226 }  ,draw opacity=1 ]   (291.2,190) -- (331,190.2) ;
\draw [color={rgb, 255:red, 74; green, 144; blue, 226 }  ,draw opacity=1 ]   (250.2,190.2) .. controls (249.4,215) and (290.2,215.4) .. (291.2,190) ;
\draw [color={rgb, 255:red, 74; green, 144; blue, 226 }  ,draw opacity=1 ]   (171,190) .. controls (170.6,214.2) and (210.2,215) .. (211.2,190) ;
\draw [color={rgb, 255:red, 74; green, 144; blue, 226 }  ,draw opacity=1 ]   (60.8,190.2) .. controls (60.6,215.8) and (100.2,214.6) .. (101,190) ;
\draw  [color={rgb, 255:red, 255; green, 0; blue, 0 }  ,draw opacity=1 ][fill={rgb, 255:red, 255; green, 0; blue, 0 }  ,fill opacity=1 ] (75.4,185.9) .. controls (75.4,182.64) and (78.04,180) .. (81.3,180) .. controls (84.56,180) and (87.2,182.64) .. (87.2,185.9) .. controls (87.2,189.16) and (84.56,191.8) .. (81.3,191.8) .. controls (78.04,191.8) and (75.4,189.16) .. (75.4,185.9) -- cycle ;
\draw  [color={rgb, 255:red, 255; green, 0; blue, 0 }  ,draw opacity=1 ][fill={rgb, 255:red, 255; green, 0; blue, 0 }  ,fill opacity=1 ] (185,185.7) .. controls (185,182.44) and (187.64,179.8) .. (190.9,179.8) .. controls (194.16,179.8) and (196.8,182.44) .. (196.8,185.7) .. controls (196.8,188.96) and (194.16,191.6) .. (190.9,191.6) .. controls (187.64,191.6) and (185,188.96) .. (185,185.7) -- cycle ;
\draw  [color={rgb, 255:red, 255; green, 0; blue, 0 }  ,draw opacity=1 ][fill={rgb, 255:red, 255; green, 0; blue, 0 }  ,fill opacity=1 ] (265.4,185.5) .. controls (265.4,182.24) and (268.04,179.6) .. (271.3,179.6) .. controls (274.56,179.6) and (277.2,182.24) .. (277.2,185.5) .. controls (277.2,188.76) and (274.56,191.4) .. (271.3,191.4) .. controls (268.04,191.4) and (265.4,188.76) .. (265.4,185.5) -- cycle ;
\draw  [color={rgb, 255:red, 80; green, 227; blue, 194 }  ,draw opacity=1 ][fill={rgb, 255:red, 80; green, 227; blue, 194 }  ,fill opacity=1 ] (235.06,180.06) -- (229.71,183.04) -- (229.7,177.1) -- cycle ;
\draw  [color={rgb, 255:red, 74; green, 144; blue, 226 }  ,draw opacity=1 ][fill={rgb, 255:red, 74; green, 144; blue, 226 }  ,fill opacity=1 ] (136,190) -- (130.65,192.98) -- (130.64,187.04) -- cycle ;

\draw (264.6,141.8) node [anchor=north west][inner sep=0.75pt]  [color={rgb, 255:red, 80; green, 227; blue, 194 }  ,opacity=1 ] [align=left] {$\displaystyle \gamma _{l}$};
\draw (264.8,212) node [anchor=north west][inner sep=0.75pt]  [color={rgb, 255:red, 74; green, 144; blue, 226 }  ,opacity=1 ] [align=left] {$\displaystyle \gamma _{r}$};
\draw (115.6,147) node [anchor=north west][inner sep=0.75pt]    {$\mathcal{\textcolor[rgb]{1,0,0}{N}}$};
\end{tikzpicture}
\caption{A singularity set $\mathcal{N} \subset \mathbb{C}$ and the contours $\gamma_l$ and $\gamma_r$.}
\label{fig:fig1}
\end{figure}

\noindent Following \cite[Sections 4.2--4.3]{cg}, the median resummation of $\mathcal{F}$ is given by
\begin{equation} \label{smed}
S^{\textbf{med}}(x) = S^{\textbf{med}}[\mathcal{F}](x) := \frac{1}{2} \Bigl( S^{+}[\mathcal{F}](x) + S^{-}[\mathcal{F}](x) \Bigr).
\end{equation}
In general, care is required when defining median summations, see \cite[Section 2]{hlss}. Under favorable circumstances, for example when $G(p)$ is the Borel transform of a formal (Gevrey-1) solution in powers of $1/x$ to a linear differential equation, each of $\mathcal{S}^{\pm}[\mathcal{F}](x)$ (and thus, of course, $S^{\textbf{med}}(x)$) is a well-defined homomorphic solution. This is one incarnation of the terminology “resummation”.

In this paper, we will study the median resummation of the Borel transform for certain formal power series associated to knots.

Let $K$ be a knot and $J_N(K; q)$ be the usual colored Jones polynomial, normalized to be 1 for the unknot. If $N=2$, then we recover the Jones polynomial \cite{Jones}. As a knot invariant, the colored Jones polynomial is of fundamental importance in several open problems in quantum topology (see, e.g., \cite{garoufalidis-AJ, garoufalidis-slope, kal-tran, my, z}). We now recall the Habiro ring \cite{habiro}
\begin{equation*}
\displaystyle \mathcal{H} := \varprojlim_{n} \mathbb{Z}[q] / \langle (q)_n \rangle
\end{equation*}
where 
\begin{equation*}
(a)_n = (a;q)_n := \prod_{k=1}^{n} (1-aq^{k-1})
\end{equation*}
is the standard $q$-Pochhammer symbol. Every element of $\mathcal{H}$ may be written as
\begin{equation*}
\sum_{n=0}^{\infty} b_n(q) (q)_n
\end{equation*}
where $b_n(q) \in \mathbb{Z}[q]$. Given $K$, there exists an element $\Phi_{K}(q) \in \mathcal{H}$ called the Kashaev invariant~\cite{Kashaev:6j} which matches $J_N(K; q)$ (up to a prefactor) when $q=\zeta_N := e^{\frac{2 \pi i}{N}}$ \cite[Theorem 2]{hl}. To illustrate, for the trefoil knot $K=3_1$, we have \cite{habiro1, thang}
\begin{equation}\label{cjp31}
J_{N}(3_1; q) = q^{1-N} \sum_{n=0}^{\infty} q^{-nN} (q^{1-N})_n.
\end{equation}
In this case, the Kontsevich-Zagier series \cite{z}
\begin{equation} \label{kz}
\Phi_{3_1}(q) := \sum_{n=0}^{\infty} (q)_n \in \mathcal{H}
\end{equation}
gives the Kashaev invariant for the trefoil~\cite{Kashaev:6j} and so from \eqref{cjp31} and \eqref{kz}, we observe
\begin{equation*}
\Phi_{3_1}(\zeta_N) \zeta_N = J_N(3_1; \zeta_N).
\end{equation*}
Now, a crucial aspect of the computations in \cite{cg} is the ``strange identity" 
\begin{equation} \label{strid}
\Phi_{3_1}(q) ``=" -\frac{1}{2} \sum_{n=1}^{\infty} n \Bigl( \frac{12}{n} \Bigr) q^{\frac{n^2 - 1}{24}}
\end{equation}
where $``="$ means that both sides agree to infinite order at any root of unity and $\bigl( \frac{12}{*} \bigr)$ is the quadratic character of conductor $12$. More precisely, replacing $q$ by the product of a root of unity and $e^{-x}$, then letting $x \to 0^{+}$, the right-hand side has an asymptotic expansion as a power series in $x$ and this power series is given by the left-hand side at $q$. For further details, see \cite{akl, z1}. If $q=e^{2 \pi i x}$, $x \in \mathbb{H}$, then (\ref{strid}) implies for $0 \neq \alpha \in \mathbb{Q}$
\begin{equation} \label{rational}
e^{\frac{\pi i \alpha}{12}} \Phi_{3_1}(e^{2 \pi i \alpha}) = -\frac{1}{2} \sum_{n=1}^{\infty} n \Bigl( \frac{12}{n} \Bigr) e^{\frac{n^2 \pi i \alpha}{12}}
\end{equation}
where the right-hand side of (\ref{rational}) is interpreted as the radial limit $x \to \alpha$. The significance of ``identities" akin to (\ref{strid}) is evident in various applications, e.g., asymptotics and congruences for Fishburn and generalized Fishburn numbers modulo prime powers \cite{ak, akl, as, Be, an, gjko, straub, z1}, the quantum modularity of $\Phi_{3_1}(q)$ \cite{go, z} and expressing WRT invariants of Brieskorn homology spheres in terms of limiting values of Eichler integrals \cite{hikamiWRT1, hikamiWRT2, hikamiWRT3}. For a recent advance using the ``Bailey machinery" which not only recovers (\ref{strid}), but produces a wealth of new examples, see \cite{lovejoy}.

Elements of $\mathcal{H}$ can also be formally expanded around any root of unity. Therefore, we let
\begin{equation} \label{habel}
F_{K}(x) := \Phi_{K}(e^{-\frac{1}{x}})\in\C[\![1/x]\!].
\end{equation}
To emphasize the dependence on $K$, we denote the Borel transform of $F_{K}(x)$ by $G_{K}(p)$ and its conjecturally well-defined median resummation by $S_{K}^{\textbf{med}}(x)$. Finally, we use the notation $\stackrel{.}{=}$ to denote equality up to a prefactor which depends on $K$. We can now state the main conjecture from \cite{cg} (slightly edited for clarity) which is part of a larger program to understand the analytic continuation of invariants of ``knotted objects" arising in Chern-Simons theory \cite{g, gl}.

\begin{conjecture}\label{cgconj} For every knot $K$,
\begin{itemize}
\item[(1)] $F_K(x)$ has a resurgent Borel transform $G_{K}(p)$. \\

\item[(2)] $S^{\textbf{med}}_{K}(x)$ is an analytic function defined on $\Re(x) > 0$ with radial limits at the points $\frac{1}{2\pi i} \mathbb{Q}$ of its natural boundary. \\

\item[(3)] If $(2)$ is true, then consider the radial limit
\begin{equation*}
S_{K}^{\bf{med}}\left( -\frac{1}{2 \pi i \alpha} \right)  := \lim_{x \to -\frac{1}{2 \pi i \alpha}} S_{K}^{\bf{med}}(x)
\end{equation*}
where $\Re(x) > 0$. For $0 \neq \alpha \in\Q$, we have $S_{K}^{\textbf{med}}\left(-\frac{1}{2 \pi i \alpha}\right) \stackrel{.}{=} \Phi_{K}(e^{2\pi i \alpha})$.
\end{itemize}
\end{conjecture}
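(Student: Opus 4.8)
The plan is to establish the three assertions not for an arbitrary knot, but for the two families of torus knots under consideration, exploiting that for these knots $\Phi_K$ satisfies a strange identity of the shape \eqref{strid}. So the first step is to record, for each family, such an identity: in the asymptotic sense of \eqref{strid}, $\Phi_K(q)$ equals a finite $\C$-linear combination $\sum_j c_j \sum_{n \geq 1} n\,\chi_j(n)\, q^{\kappa_j n^2}$ of partial theta series attached to even-modulus quadratic characters $\chi_j$ and positive rationals $\kappa_j$. For the torus knots $T(2,2t+1)$ and for the second family these are available in the literature (and, where a case is missing, can be produced by Bailey-pair methods). This reduces everything to proving Conjecture~\ref{cgconj}(1)--(3) for a Habiro element $\Phi$ equipped with such a strange identity, which is exactly the scope of our general resurgence theorem.

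For (1): the strange identity fixes the coefficients $a_n$ of $F_K(x) = \Phi_K(e^{-1/x})$ through the standard Euler--Maclaurin (Mellin) expansion of a partial theta series at $q = e^{-t}$, whose $n$-th coefficient is a value of the attached Dirichlet $L$-function at a negative odd integer, equivalently a generalized Bernoulli number. Substituting into \eqref{borel} and invoking the classical Laplace-pair identity presenting $x^{-1}e^{-a/x}$ as the Laplace transform of a Bessel function, the sum over $n$ becomes a Schl\"omilch-type series with closed-form value: $G_K(p)$ is, up to an elementary prefactor, a finite $\C$-linear combination of functions equal to $p^{-1/2}$ times a ratio of hyperbolic functions of $\sqrt p$. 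Such a function is meromorphic with simple poles located precisely on a discrete set $\mathcal{N} \subset \R_{>0}$ of the shape $\{\pi^2 n^2 \kappa' : n \geq 1\}$ (for $3_1$ one recovers $\{n^2\pi^2/6\}$); in particular $G_K$ is endlessly continuable, which is assertion~(1).

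For (2) and (3) the crucial observation is that the partial theta series on the right of the strange identity is a genuine holomorphic function of $x$ on $\Re(x) > 0$: writing $q = e^{-1/x}$ one has $\sum_{n \geq 1} n\chi_j(n) q^{\kappa_j n^2} = \sum_{n \geq 1} n\chi_j(n) e^{-\kappa_j n^2/x}$, an absolutely convergent series there, with natural boundary the imaginary axis and well-defined radial limits at the points $\tfrac{1}{2\pi i}\Q$ by the classical theory of strange identities. I would then identify this function with the median resummation $S^{\mathbf{med}}_K(x)$. By construction it is Gevrey-1 asymptotic to $F_K(x)$ as $x \to +\infty$; the point is to pin down \emph{which} resummation it is, since agreement of asymptotic expansions on a half-plane is not by itself enough. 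The cleanest route is a reality/symmetry argument: $\sum_{n \geq 1} n\chi_j(n) e^{-\kappa_j n^2/x}$ is invariant under $x \mapsto \bar x$ (the $\chi_j$ being real-valued, or, for the second family, after grouping conjugate characters), and this symmetry singles out the median sum $S^{\mathbf{med}}_K = \tfrac12(S^+ + S^-)$ among the Borel sums of $F_K$, as against the lateral sums; equivalently one verifies the identification directly, by deforming the principal-value Laplace contour of the explicit $G_K$ from step~(1) past its poles on $\R_{>0}$ and summing the residue series, which reassembles termwise into $\sum_{n \geq 1} n\chi_j(n) e^{-\kappa_j n^2/x}$. Granting $S^{\mathbf{med}}_K(x) \stackrel{.}{=} \sum_j c_j \sum_{n \geq 1} n\chi_j(n) e^{-\kappa_j n^2/x}$ on $\Re(x) > 0$, assertion~(2) follows at once; for (3) one notes that $x = -\tfrac{1}{2\pi i\alpha}$ corresponds to $e^{-1/x} = e^{2\pi i\alpha}$, so the radial limit equals, up to the same prefactor, $\sum_j c_j \sum_{n \geq 1} n\chi_j(n) e^{2\pi i\kappa_j\alpha n^2}$, which by the strange identity at the root of unity $e^{2\pi i\alpha}$ gives $S^{\mathbf{med}}_K(-\tfrac{1}{2\pi i\alpha}) \stackrel{.}{=} \Phi_K(e^{2\pi i\alpha})$.

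The main obstacle I expect is precisely the identification with the \emph{median} resummation. Both candidate functions are holomorphic on the same half-plane and share the same asymptotic expansion there, so their equality is not formal, and the symmetry heuristic must be upgraded to a proof that uses the precise (Stokes-automaton) definition of the median sum in the presence of an entire ray of singularities --- exactly the delicate point flagged near \cite{hlss}. The direct-computation alternative instead requires careful control of the principal-value Laplace transform of $G_K$ near its accumulating poles, together with uniform estimates guaranteeing convergence of the residue series up to the natural boundary. A secondary, bookkeeping-level difficulty arises for the second family, where the strange identity involves several partial theta pieces with distinct moduli $\kappa_j$, hence several singularity rays in the $\sqrt p$ variable, and one must check that the residue contributions line up with the $n$-th term of each partial theta series.
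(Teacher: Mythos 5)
Your top-level strategy (reduce to partial theta series via strange identities, compute the Borel transform in closed form, then identify the median sum and take radial limits) is the right one, but two of your concrete claims are wrong, and the second of them hides the actual mathematical content of the result.

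First, the Borel transform is not ``$p^{-1/2}$ times a ratio of hyperbolic functions of $\sqrt p$'' with simple poles. That description applies only to the generating function of the $L$-values with $(2n)!$ in the denominator; the Borel transform divides by $(n-1)!\,n!$ instead, and the correct closed form is a Hadamard product of the hyperbolic-ratio function with $\bigl(1-4p/b\bigr)^{-5/2}$, which converts each simple pole into an algebraic branch point of exponent $5/2$: compare \eqref{g31} and \eqref{BT}. So $G_K$ is genuinely multivalued, and endless continuability must be argued on the cut plane, not by meromorphy.

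Second, and more seriously, your route to parts (2) and (3) does not close. With branch points the lateral sums differ by Hankel-contour integrals, not residues, and in either case the resulting discontinuity series is of the form $x^{3/2}\sum_{\ell}\ell\,\tilde f(\ell)\,e^{-\ell^2\pi^2 b x/M^2}$ (see \eqref{disc}): a partial theta series in the \emph{dual} variable and with the \emph{transformed} periodic function $\tilde f$ of \eqref{ftilde}, not the series $\sum_n n\chi(n)e^{-\kappa n^2/x}$ you want. The same dual series and an Eichler-type integral of $\theta^{(0)}_{0,4M^2,\tilde f}$ appear in the radial limit of $S^{\pm}$. Equating this combination with $\Phi_K(e^{2\pi i\alpha})$ is exactly the nontrivial step, and it requires the quantum-modular transformation laws relating $\theta_{\tilde f}$ at $-b/\alpha$ to $\theta_{f}$ at $\alpha$ together with the period-function identity for the associated non-holomorphic Eichler integrals (here \eqref{HKlast1}--\eqref{HKlast2}, i.e.\ Theorem \ref{main2}). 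Your symmetry heuristic cannot substitute for this, as you acknowledge, and your ``direct'' alternative rests on the incorrect claim that the residue series reassembles into the original partial theta series in $e^{-1/x}$. Until that modularity input is supplied, neither the analyticity-plus-radial-limit statement nor the evaluation $S^{\mathbf{med}}_K(-\tfrac{1}{2\pi i\alpha})\stackrel{.}{=}\Phi_K(e^{2\pi i\alpha})$ is established.
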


In~\cite[Theorem 2.5, Theorem 3.1]{cg}, Conjecture~\ref{cgconj} (1) and (2) were proven for $K=3_1$. Here,
\begin{equation*}
F_{3_1}(x) = e^{-\frac{1}{24x}} \sum_{n=0}^{\infty} (e^{-\frac{1}{x}})_n
\end{equation*}
where an adjustment of (\ref{habel}) has been made. The Borel transform $G_{3_1}(p)$ of $F_{3_1}(x)$ is explicitly given by \cite[Theorem 3.1]{cg}
\begin{equation} \label{g31}
G_{3_1}(p) = \frac{3 \pi}{2\sqrt{2}} \sum_{n=1}^{\infty} \frac{n \bigl( \frac{12}{n} \bigr)}{\bigl (-p + \frac{n^2 \pi^2}{6} \bigr)^{5/2}}.
\end{equation}
Moreover, a close inspection of the proofs of Theorems 2.5 and 3.1 in \cite{cg} reveals that it is actually (\ref{strid}) and the periodic function $\bigl( \frac{12}{*} \bigr)$ which determine the analytic nature of $G_{3_1}(p)$ and $S^{\textbf{med}}_{3_1}(x)$. Our main goal in this paper is to prove resurgence properties for formal power series associated to elements in $\mathcal{H}$ that satisfy a general type of strange identity motivated by (\ref{strid}). Here, we emphasize the importance of periodic functions. Before stating our main result, we introduce some notation.

Let $M \geq 2$ be an integer and $k_1, k_2 \in \mathbb{Z}$ with $0 < k_1 < k_2 < \frac{M}{2}$. Let $0\neq c\in\mathbb{R}$ and $f$ be the function
\begin{eqnarray}\label{ggen}
f(n):=\begin{cases}
c &\text{if $n\equiv k_1,M-k_1\pmod{M}$,} \\
-c &\text{if $n\equiv k_2,M-k_2\pmod{M}$,}\\
0 &\text{otherwise.}
\end{cases} 
\end{eqnarray}
Note that under the conditions on $k_1$ and $k_2$, $f$ is a well-defined even function of period $M$ with mean value zero. For integers $a \geq 0$ and $b>0$, consider the partial theta series
\begin{equation} \label{pts}
\theta_{a,b,f}^{(\nu)}(q):=\sum_{n=0}^{\infty} n^\nu f(n)q^{\frac{n^2-a}{b}}
\end{equation}
where $q=e^{2\pi i x}$, $x \in \mathbb{H}$, and $\nu \in \{0, 1\}$. For $\ell \in \mathbb{Z}$, set
\begin{equation} \label{ftilde}
\tilde{f}(\ell) := (-1)^{\ell} \sin\left(\frac{(k_2-k_1)\ell\pi}{M}\right)\sin\left(\frac{(M-k_1-k_2)\ell\pi}{M}\right).
\end{equation}
Suppose there exists
\begin{equation*}
\Phi_{f}(q) := \sum_{n=0}^{\infty} A_{n,f}(q) (q)_n \in \mathcal{H}
 \end{equation*}
where $A_{n,f}(q) \in \mathbb{Z}[q]$ such that
\begin{equation} \label{Hstrange}
 \Phi_{f}(q) ``=" \theta_{a,b,f}^{(1)}(q).
\end{equation}
Finally, define 
\begin{equation} \label{gen-expansions2}
\mathcal{F}_{f} \left( x \right) := e^{-\frac{a}{bx}} {\Phi}_f(e^{-\frac{1}{x}})=\sum_{n=0}^\infty\dfrac{C_{n,f}}{n!}\left(\frac{1}{bx}\right)^n\,.
\end{equation}
The asymptotics of the partial theta series $\theta_{a,b,f}^{(1)}(q)$ around $q=1$, which by the strange identity in \eqref{Hstrange} gives the expansion of $\Phi_f(q)$, can be computed by $L$-values. By standard calculations using the Mellin transform (see, e.g., \cite{akl, gjko, z}), we find that
\begin{eqnarray*}\label{Cnf}
C_{n,f}=(-1)^nL(-2n-1,f)
\end{eqnarray*}
where
\begin{eqnarray*}
 L(s,f) :=\sum_{n=1}^\infty\dfrac{f(n)}{n^s}
\end{eqnarray*}
is the $L$-series associated to $f$. In particular, using \cite[Lemma 3.2]{akl}, we obtain that
\begin{eqnarray}\label{CnfBer}
C_{n,f}=(-1)^{n+1} \dfrac{M^{2n+1}}{2n+2} \sum_{m=1}^M f(m)B_{2n+2}\left(\dfrac{m}{M}\right)
\end{eqnarray}
where for $k\geq 0$, $B_k(x)$ denotes the $k$th Bernoulli polynomial. By (\ref{gen-expansions2}) and \eqref{CnfBer}, we have
\begin{equation} \label{F}
\mathcal{F}_{f} \left( \frac{1}{x} \right)  = \sum_{n=0}^{\infty} \frac{(-1)^{n+1} M^{2n+1}}{(2n+2) n!} \sum_{m=1}^{M} f(m) B_{2n+2}\left(\dfrac{m}{M}\right) \left(\frac{x}{b}\right)^n.
\end{equation}
We will denote the Borel transform of $\mathcal{F}_f(x)$ by $G_f(p)$. Here, we write $S_{f}^{\bf{med}}(x)$ for $S^{\textbf{med}}(x)$, \begin{equation*}
S_{f}^{\bf{med}}\left( -\frac{1}{2 \pi i \alpha} \right)  := \lim_{x \to -\frac{1}{2 \pi i \alpha}} S_{f}^{\bf{med}}(x),
\end{equation*} 
$\theta_{a,b,f}^{(\nu)}(x)$ for $\theta_{a,b,f}^{(\nu)}(q)=\theta_{a,b,f}^{(\nu)}(e^{2\pi i x})$ and $\displaystyle \theta_{a,b,f}^{(\nu)}(\alpha) := \lim_{x \to \alpha} \theta_{a,b,f}^{(\nu)}(x)$ where $0 \neq \alpha \in \mathbb{Q}$. Our main result is now the following.

\begin{theorem}\label{main} For $f$ given by \eqref{ggen} and $\mathcal{F}_f(x)$ as in \eqref{F}, we have the following:
\begin{itemize}
\item[(1)] $\mathcal{F}_f(x)$ has a resurgent Borel transform $G_f(p)$. \\

\item[(2)] $S_f^{\textbf{med}}(x)$ is an analytic function defined on $\Re(x) > 0$ with radial limits at the points $\frac{1}{2\pi i} \mathbb{Q}$ of its natural boundary. \\

\item[(3)] For $0 \neq \alpha\in\mathbb{Q}$, we have
\begin{equation}\label{eq:smed.form}
S^{\textbf{med}}_f\left(-\dfrac{1}{2\pi i\alpha}\right)=\frac{c b e^{\pi i/4}}{M\pi (i\alpha)^{3/2}}\int_0^{+i\infty} \frac{\theta_{0,4M^2,\tilde{f}}^{(0)}(bp)}{\big(\frac{1}{\alpha}+ p\big)^{3/2}} dp+\left(\frac{b}{i\alpha}\right)^{3/2}\frac{\sqrt{2} c}{M^2}\theta^{(1)}_{0,4M^2,\tilde{f}}\left(-\frac{b}{\alpha}\right).
\end{equation}
\end{itemize}
\end{theorem}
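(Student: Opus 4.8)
The plan is to generalize the Costin--Garoufalidis argument for $3_1$, carrying the periodicity of $f$ and the intervention of its finite cosine transform through every step. \textit{For part (1)}, observe that by \eqref{gen-expansions2}--\eqref{F} the Borel transform $G_f(p)$ is the analytic continuation of the germ at $0$ of $\sum_{n\geq1}\frac{C_{n,f}}{n!\,(n-1)!\,b^{n}}p^{n-1}$ with $C_{n,f}=(-1)^{n}L(-2n-1,f)$. I would feed in the functional equation of $L(s,f)$: writing $L(s,f)=M^{-s}\sum_{m}f(m)\zeta(s,m/M)$ and applying the Hurwitz functional equation (the sine contributions vanish since $f$ is even) gives
\begin{equation*}
L(1-s,f)=2(2\pi)^{-s}\cos(\tfrac{\pi s}{2})\,\Gamma(s)\,M^{s-1}\,L(s,\hat f),\qquad \hat f(\ell):=\sum_{m\bmod M}f(m)\cos(2\pi m\ell/M),
\end{equation*}
and a product-to-sum identity identifies $\hat f=-4c\,\tilde f$ with $\tilde f$ as in \eqref{ftilde}. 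Setting $s=2n+2$, inserting $L(2n+2,\hat f)=\sum_{\ell\geq1}\hat f(\ell)\ell^{-2n-2}$ into the series for $G_f$, interchanging the (absolutely convergent for small $|p|$) sums over $n$ and $\ell$, and collapsing the sum over $n$ with $(1-u)^{-5/2}=\sum_{k}\binom{k+3/2}{k}u^{k}$ produces
\begin{equation*}
G_f(p)=\frac{3\pi c\,b^{3/2}}{M^{2}}\sum_{\ell=1}^{\infty}\frac{\ell\,\tilde f(\ell)}{\bigl(-p+\frac{\pi^{2}b}{M^{2}}\ell^{2}\bigr)^{5/2}},
\end{equation*}
where the exponent $5/2$ and the extra factor $\ell$ in the numerator are forced by matching the $\ell$-exponents on the two sides; for $f=-\tfrac12(\tfrac{12}{\cdot})$ this recovers \eqref{g31}. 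Since $\tilde f$ is bounded and the $\ell$-th term is $O(\ell^{-4})$ off the discrete set $\mathcal N=\{\tfrac{\pi^{2}b}{M^{2}}\ell^{2}:\ell\geq1,\ \tilde f(\ell)\neq0\}\subset\R_{>0}$, the series converges locally uniformly on $\C\setminus\mathcal N$; each term has a single algebraic branch point, so $G_f$ is holomorphic on $\C\setminus\mathcal N$ and endlessly analytically continuable, hence resurgent.

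\textit{For part (2)}, from the closed form $e^{-px}G_f(p)$ decays on the rays $\arg p=\pm\epsilon$ for $\arg x$ in intervals whose union exhausts $(-\tfrac\pi2,\tfrac\pi2)$ as $\epsilon\to0^{+}$, and $\gamma_l$ (resp.\ $\gamma_r$) may be deformed onto $\arg p=+\epsilon$ (resp.\ $-\epsilon$) without crossing $\mathcal N$; hence $S^{\pm}_f$, and so $S^{\textbf{med}}_f=\tfrac12(S^+_f+S^-_f)$, are holomorphic on $\Re x>0$ with natural boundary $\tfrac1{2\pi i}\R$. The difference $S^+_f-S^-_f$ is a sum over $\ell$ of Hankel loop integrals around the branch points $\tfrac{\pi^{2}b}{M^{2}}\ell^{2}$, and after the shift $p\mapsto p+\tfrac{\pi^{2}b}{M^{2}}\ell^{2}$ and Hankel's formula $\tfrac1{2\pi i}\int_{\mathcal H}e^{-vx}(-v)^{-5/2}\,dv=x^{3/2}/\Gamma(5/2)$ one obtains
\begin{equation*}
\bigl(S^+_f-S^-_f\bigr)(x)=c_{1}\,x^{3/2}\sum_{\ell=1}^{\infty}\ell\,\tilde f(\ell)\,e^{-\frac{\pi^{2}b}{M^{2}}\ell^{2}x}
\end{equation*}
for an explicit constant $c_{1}$ -- a partial theta series in $x$. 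Radial limits at $x\to-\tfrac1{2\pi i\alpha}$, $0\neq\alpha\in\Q$, then exist: for $S^+_f-S^-_f$ this is the (classical) radial limit of a partial theta series at a rational point, and for $S^-_f$ one rotates the contour onto the imaginary axis, where the only singularity of the integrand stays off the axis (as $-1/\alpha\notin i\R$) and the relevant $\tilde f$-series genuinely decays.

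\textit{For part (3)}, write $S^{\textbf{med}}_f=S^-_f+\tfrac12(S^+_f-S^-_f)$. Evaluating the last display at $x=-\tfrac1{2\pi i\alpha}$ and recognizing $\sum_{\ell}\ell\tilde f(\ell)e^{-\frac{\pi^{2}b}{M^{2}}\ell^{2}x}$ there as $\theta^{(1)}_{0,4M^{2},\tilde f}(-b/\alpha)$ produces, after simplifying the powers of $i$, the second term $\bigl(\tfrac{b}{i\alpha}\bigr)^{3/2}\tfrac{\sqrt2\,c}{M^{2}}\theta^{(1)}_{0,4M^{2},\tilde f}(-b/\alpha)$ of \eqref{eq:smed.form}. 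For $S^-_f(-\tfrac1{2\pi i\alpha})=a_0+\int_{\gamma_r}e^{-px}G_f(p)\,dp$ with $a_0=C_{0,f}$, I would rotate $\gamma_r$ onto the imaginary axis (permissible since $e^{-px}G_f(p)$ decays there and no point of $\mathcal N$ is crossed), interchange the sum over $\ell$ with the integral, and in each term rescale the integration variable and integrate by parts once so that the power $5/2$ becomes $3/2$ and the $\ell$-dependent part of the exponent becomes $\ell^{2}$; the resulting boundary terms are proportional to $L(2,\tilde f)$ and cancel $a_0$ by the functional equation at $s=2$, while the surviving integral resums the $\tilde f$-series into $\theta^{(0)}_{0,4M^{2},\tilde f}(bp)$, giving the first term $\tfrac{cb\,e^{\pi i/4}}{M\pi(i\alpha)^{3/2}}\int_0^{+i\infty}\frac{\theta^{(0)}_{0,4M^{2},\tilde f}(bp)}{(1/\alpha+p)^{3/2}}\,dp$. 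Adding the two contributions gives \eqref{eq:smed.form} (for definiteness one takes $\alpha>0$; the case $\alpha<0$ is symmetric, with $S^+_f\leftrightarrow S^-_f$).

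The step I expect to be the main obstacle is the analytic bookkeeping near the natural boundary in parts (2) and (3): justifying the contour rotations onto the imaginary axis and the interchanges of the (only conditionally convergent, on the boundary) sum over $\ell$ with the $p$-integral and with the radial limit, and then pinning down every phase -- the $e^{\pi i/4}$, $(i\alpha)^{3/2}$, $\sqrt2$, and so on -- which requires careful control of the branches of $\bigl(-p+\tfrac{\pi^{2}b}{M^{2}}\ell^{2}\bigr)^{5/2}$ across the cuts and of the orientations of the Hankel loops. The algebraic input -- the functional equation and the binomial collapse -- is by comparison routine.
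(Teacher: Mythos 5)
Your proposal is correct and, for parts (2) and (3), follows essentially the same route as the paper: lateral Borel sums along rays $\arg p=\pm\theta$; the Hankel-loop computation of $S^{+}-S^{-}$ giving $x^{3/2}$ times a partial theta series whose radial limits at rationals are classical; integration by parts to lower the exponent from $5/2$ to $3/2$ followed by the change of variables and branch-cut analysis that turns $S^{\mp}$ into the Eichler-type integral of $\theta^{(0)}_{0,4M^2,\tilde f}$; and the cancellation of the constant term via $C_M=\frac{2Mc}{\pi^2}L(2,\tilde f)$, which is the paper's \eqref{cm} and is exactly your ``functional equation at $s=2$.'' Where you genuinely diverge is part (1), the explicit evaluation of $G_f(p)$. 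The paper writes $G_f=g_{1,f}\circledast g_{2,f}$ as a Hadamard product, sums $g_{1,f}$ via the Bernoulli generating function into a ratio of sines as in \eqref{f1}, and evaluates the product by a contour-integral and residue computation; you instead apply the functional equation of $L(s,f)$ at $s=2n+2$, identify the dual coefficient $\hat f=-4c\tilde f$ by product-to-sum, and collapse the $n$-sum with $\sum_{k\ge0}\tfrac{(2k+3)!}{(k+1)!\,k!}u^k=6(1-4u)^{-5/2}$. I have checked that this yields exactly \eqref{BT}, including the constant $\tfrac{3\pi c}{M^2 b}$, and it makes the singular support $\{\tfrac{b\pi^2\ell^2}{M^2}:\tilde f(\ell)\neq 0\}$ transparent without the residue bookkeeping; what it does not produce is the intermediate closed form \eqref{f1}, which the paper's route gives for free. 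Two smaller remarks: the paper establishes analyticity of $S^{\textbf{med}}_f$ on $\Re(x)>0$ by the explicit error-function representation \eqref{errorf}--\eqref{Smed}, whereas your ray-deformation argument, while standard and sufficient, is stated loosely (for $S^{+}$ the admissible rays are $\arg p=\theta$ with $\theta\in(0,\pi)$, giving a sector of opening greater than $\pi$ that contains $\Re(x)>0$); and, as you yourself flag, the phase and branch bookkeeping in part (3) is where the paper expends most of its effort --- your sketch promises rather than performs it, but the constants you do commit to are the right ones.
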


\begin{remark} 
The proof of Theorem \ref{main} demonstrates resurgence properties for the normalized partial theta series $e^{-\frac{a}{bx}} \theta_{a,b,f}^{(1)}(e^{-\frac{1}{x}})$ without assuming \eqref{Hstrange}. Our results and those in the announcement \cite{hlss} appear to be related; however, we give a proof that can be used to prove Conjecture 1.1 for our cases of interest (see Corollary \ref{cor}).
\end{remark}

We give two applications of Theorem \ref{main}. The first result explicitly computes \eqref{eq:smed.form} in the following situation. For coprime positive integers $s$ and $t$, $1\le m\le t-1$ and $1\le n\le s-1$, let $\chi_{s,t}^{(n,m)}(k)$ be the periodic function obtained by choosing $c=1$, $M=2st$, $k_1= | nt-ms |$ and $k_2 = nt+ms$ in \eqref{ggen}, i.e.,
\begin{align}\label{pf}
\chi_{s,t}^{(n,m)}(k)=\begin{cases}
1 & \text{if $k\equiv \pm(nt-ms)\pmod{2st}$},\\
-1& \text{if $k\equiv \pm(nt+ms)\pmod{2st}$},\\
0 &\text{otherwise}.
\end{cases}
\end{align}
For coprime odd integers $s$, $t$, define the sets
\begin{align} \label{d1}
\mathscr{D}_1(s, t)&:=\left\{(n, m): 1\le n\le \frac{s-1}{2}, 1\le m\le t-1\right\}
\end{align}
and
\begin{align} \label{d2}
\mathscr{D}_2(s, t)&:=\left\{(n, m): 1\le n\le s-1, 1\le m\le \frac{t-1}{2}\right\}.
\end{align}
There is a bijection between $\mathscr{D}_1(s, t)$ and $\mathscr{D}_2(s, t)$ (see Lemma \ref{scha} and \eqref{bi}). For $s$ and $t$ coprime and of opposite parities, we consider
\begin{align*}
\mathscr{D}_3(s, t):=
\begin{cases}
\left\{(n, m): 1\le n\le \frac{s-1}{2}, 1\le m\le t-1\right\} & \text{if $s\equiv 1\!\!\!\pmod{2}$},\\
\left\{(n, m): 1\le n\le s-1, 1\le m\le \frac{t-1}{2}\right\} & \text{if $s\equiv 0\!\!\!\pmod{2}$}.
\end{cases}
\end{align*}
Finally, define the set $\mathscr{D}(s, t)$ of pairs which yield distinct characters in (\ref{pf}) as follows:
\begin{align} \label{d}
\mathscr{D}(s, t):=
\begin{cases}
\mathscr{D}_1(s, t)\quad \text{or}\quad \mathscr{D}_2(s, t) & \text{if $s\equiv t\equiv 1\!\!\!\pmod{2}$},\\
\mathscr{D}_3(s, t) & \text{if $s\not\equiv t\!\!\!\pmod{2}$}.
\end{cases}    
\end{align}
One sees that $D(s, t):=|\mathscr{D}(s, t)|=\frac{(s-1)(t-1)}{2}$. 

\begin{theorem} \label{main2} Let $s$ and $t$ be coprime positive integers and $(n,m) \in \mathscr{D}(s,t)$. For $0 \neq \alpha \in \mathbb{Q}$, we have
\begin{equation}
S_{\chi_{s,t}^{(n,m)}}^{\bf{med}}\left(-\frac{1}{2\pi i\alpha}\right) = \theta_{0,4st,\chi_{s,t}^{(n,m)}}^{(1)}(\alpha).  
\end{equation}
\end{theorem}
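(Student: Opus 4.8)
The plan is to deduce Theorem~\ref{main2} from Theorem~\ref{main} by specializing the parameters and simplifying the right‑hand side of \eqref{eq:smed.form}. Taking $f=\chi^{(n,m)}_{2st}$ means $c=1$, $M=2st$, $k_1=nt-ms$, $k_2=nt+ms$, and $b=4st=2M$ (the value for which $\Phi_{\chi^{(n,m)}_{2st}}$ is expected to satisfy a strange identity of the shape \eqref{Hstrange}; by the Remark following Theorem~\ref{main}, $b$ is the only extra datum entering \eqref{eq:smed.form}). With these choices the constants collapse ($cb/(M\pi)=2/\pi$, $b/(i\alpha)=2M/(i\alpha)$), so Theorem~\ref{main}(3) reads
\[
S_{\chi_{2st}^{(n,m)}}^{\textbf{med}}\!\left(-\frac{1}{2\pi i\alpha}\right)
=\frac{2e^{\pi i/4}}{\pi (i\alpha)^{3/2}}\int_0^{+i\infty}\frac{\theta_{0,4M^2,\tilde f}^{(0)}(2Mp)}{\left(\frac1\alpha+p\right)^{3/2}}\,dp
+\frac{\sqrt2}{M^2}\left(\frac{2M}{i\alpha}\right)^{3/2}\theta^{(1)}_{0,4M^2,\tilde f}\!\left(-\frac{2M}{\alpha}\right).
\]
Everything then reduces to a single identity for partial theta series, namely that this right‑hand side equals $\theta^{(1)}_{0,4st,\chi^{(n,m)}_{2st}}(\alpha)$.

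The first step is to make $\tilde f$ explicit. From \eqref{ftilde}, using $k_2-k_1=2ms$ and $M-k_1-k_2=2t(s-n)$, we have $\tilde f(\ell)=(-1)^\ell\sin(2ms\ell\pi/M)\sin(2t(s-n)\ell\pi/M)$, an even function of period $2M$ vanishing at $0$ and at $M$. A product‑to‑sum identity together with $(-1)^\ell\cos(\vartheta-\ell\pi)=\cos\vartheta$ and $ts=M/2$ collapses this to $\tilde f(\ell)=\tfrac12\big(\cos(2\pi k_2\ell/M)-\cos(2\pi k_1\ell/M)\big)=-\tfrac14\,\widehat{\chi^{(n,m)}_{2st}}(\ell)$, where $\widehat{\,\cdot\,}$ denotes the finite Fourier transform modulo $M$. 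Hence the $\tilde f$‑theta series appearing above are, up to explicit constants, the finite‑Fourier‑dual theta series of $\chi^{(n,m)}_{2st}$ — exactly the shape produced by a modular transformation.

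The heart of the argument is then a reciprocity (``quantum modularity'') identity for the weight $3/2$ partial theta series attached to an even, period‑$M$, mean‑zero function $\chi$: for $0\neq\alpha\in\Q$ the radial limit $\theta^{(1)}_{0,2M,\chi}(\alpha)=\lim_{x\to\alpha,\,x\in\mathbb{H}}\sum_{n\geq0}n\chi(n)e^{2\pi i n^2 x/(2M)}$ can be written as the sum of (i) a period integral of the $\widehat\chi$‑theta — which, after the rescaling that turns the modulus $2M$ into $4M^2=(2M)^2$, produces exactly the integral term of \eqref{eq:smed.form}, the kernel $(\tfrac1\alpha+p)^{-3/2}$ and the $e^{\pi i/4}$, $(i\alpha)^{-3/2}$ factors coming from the weight $3/2$ automorphy factor and the Gauss sum — and (ii) the value at the dual rational $-2M/\alpha$ of the $\widehat\chi$‑partial theta, that is, the $\theta^{(1)}_{0,4M^2,\tilde f}(-2M/\alpha)$ term. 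This is the mechanism of \cite[proof of Theorem~3.1]{cg} in the case $\chi=(\tfrac{12}{\cdot})$, and it follows by applying the classical Lawrence--Zagier/Hikami Eichler‑integral computations (Poisson summation residue class by residue class, which exchanges $\chi$ with $\widehat\chi$) to the quadratic data of $\chi^{(n,m)}_{2st}$; since $\chi^{(n,m)}_{2st}$ is mean‑zero by \eqref{ggen}, the $L$‑value boundary term that would otherwise occur is absent. One also checks that the radial limits on both sides exist — the left‑hand side by Theorem~\ref{main}(2), the right‑hand side by the standard asymptotics of partial theta series at roots of unity — and agree.

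The main obstacle will be the bookkeeping in this last step: matching every constant and, above all, every branch of the half‑integral powers $(i\alpha)^{3/2}$, $(\tfrac1\alpha+p)^{3/2}$ and of the dual argument $-2M/\alpha$, treating $\alpha>0$ and $\alpha<0$ separately (the contours $\gamma_l,\gamma_r$, and hence $S^{\textbf{med}}$, behave differently on the two sides of the natural boundary, which is where the $\tfrac12(S^++S^-)$ median prescription is felt). A secondary point is that ranging $(n,m)$ over $\mathscr{D}(s,t)$ rather than the full box $1\le n\le s-1$, $1\le m\le t-1$ merely selects one representative from each pair producing the same function up to sign; this is the form in which the identities will later be summed over $\mathscr{D}(s,t)$, via Hikami's decomposition of the torus‑knot Kashaev invariant, to obtain Corollary~\ref{cor}.
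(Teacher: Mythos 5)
Your proposal is correct and follows essentially the same route as the paper: specialize Theorem~\ref{main}(3) with $c=1$, $M=2st$, $b=4st$, identify $\tilde f$ (up to an explicit constant) with the finite Fourier dual of $\chi_{2st}^{(n,m)}$ --- which is precisely the content of the paper's Proposition~\ref{gentor}, expressing $\tilde\chi_{2st}^{(n,m)}$ via the $S$-matrix in terms of the family $\chi_{2st}^{(n',m')}$ --- and then absorb the resulting period-integral-plus-dual-value expression using the Hikami--Kirillov/Lawrence--Zagier Eichler-integral reciprocity, which the paper imports as \eqref{transgen}, \eqref{HKlast1} and \eqref{HKlast2}. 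The only differences are presentational (Fourier-dual language versus the explicit $S$-matrix decomposition, and citing the quantum-modularity mechanism wholesale rather than the specific equations of \cite{HK1}).
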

The second result verifies Conjecture \ref{cgconj} for two families of torus knots.

\begin{corollary}\label{cor} Let $u,k\in\mathbb{Z}_{\geq 1}$. Conjecture \ref{cgconj} is true for the families of torus knots $T(2, 2u+1)$ and $T(3, 2^k)$.
\end{corollary}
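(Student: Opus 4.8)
The plan is to realize each torus knot in the two families as an instance of the general framework of Theorem \ref{main} and Theorem \ref{main2}, so that Conjecture \ref{cgconj} follows immediately from those two results. The first step is to recall (or derive) the cyclotomic/Habiro expansion of the Kashaev invariant $\Phi_K(q)$ for $K=T(2,2u+1)$ and $K=T(3,2^k)$, together with the corresponding strange identity expressing $\Phi_K(q)$ ``$=$'' a partial theta series of the shape $\theta^{(1)}_{a,b,f}(q)$ as in \eqref{pts}. For the torus knots $T(2,2u+1)$ this is classical: the Kontsevich--Zagier-type series for $T(2,2u+1)$ is known (Hikami, Zagier), and the associated half-derivative of a theta function is governed by an \emph{even periodic function} of period $M=2(2u+1)$ (more precisely, one of the Hikami functions $\chi^{(\text{odd})}_{2(2u+1)}$). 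For $T(3,2^k)$ one uses the analogous cyclotomic expansion for $(3,b)$ torus knots and the resulting partial theta expansion, whose periodic function has period $M=6\cdot 2^k$. The key point to check is that in both cases the periodic function is, up to normalization, one of the functions $\chi^{(n,m)}_{2st}$ from \eqref{pf}, with $s=2$, $t=2u+1$ in the first family and $s=3$, $t=2^k$ in the second family, and that the relevant index $(n,m)$ lies in $\mathscr{D}(s,t)$.

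The second step is bookkeeping: verify that the function $f$ attached to $K$ satisfies the hypotheses imposed before \eqref{ggen}, namely that it is even, of period $M\ge 2$, with mean value zero, and takes the three-valued form $c,-c,0$ dictated by \eqref{ggen}. For $T(2,2u+1)$ the periodic function is supported on a single pair of residues $\pm k_1$ with value $+c$ and a single pair $\pm k_2$ with value $-c$ after collecting terms; indeed with $s=2$ one has $nt-ms$ and $nt+ms$ with $n=1$, $1\le m\le t-1$, and these exhaust (up to the bijection of Lemma \ref{scha}) the needed data, so that $\chi^{(n,m)}_{2st}=\chi^{(1,m)}_{4u+2}$ for a suitable $m$ reproduces the strange-identity function of $T(2,2u+1)$. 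For $T(3,2^k)$ one takes $s=3$ (odd) and $t=2^k$ (even), so that $s\not\equiv t\pmod 2$ and $\mathscr{D}(s,t)=\mathscr{D}_3(s,t)=\{(n,m):1\le n\le 1,\ 1\le m\le 2^k-1\}$; again a single $n=1$ suffices and one matches the torus-knot function to $\chi^{(1,m)}_{6\cdot 2^k}$ for the appropriate $m$. Decomposing a general multi-term periodic function as a sum of the elementary ones in \eqref{ggen} is harmless since all constructions (Borel transform, median resummation, radial limits) are linear in $f$; so even if the genuine strange-identity function for $T(3,2^k)$ is a sum of several $\chi^{(1,m)}_{6\cdot2^k}$, applying Theorems \ref{main} and \ref{main2} termwise and summing gives the claim.

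The third step is to conclude. Part (1) of Conjecture \ref{cgconj} for these $K$ is exactly Theorem \ref{main}(1) applied to the matched $f$ (and $\mathcal{F}_f=F_K$ up to the prefactor $\stackrel{.}{=}$ absorbed in \eqref{gen-expansions2}); part (2) is Theorem \ref{main}(2); and part (3), the identification of the radial limit of $S^{\textbf{med}}_K$ with $\Phi_K(e^{2\pi i\alpha})$ up to a $K$-dependent prefactor, is Theorem \ref{main2}, which computes $S^{\textbf{med}}_{\chi^{(n,m)}_{2st}}(-\tfrac{1}{2\pi i\alpha})=\theta^{(1)}_{0,4st,\chi^{(n,m)}_{2st}}(\alpha)$, the right-hand side being $\Phi_K(e^{2\pi i\alpha})$ (up to prefactor) by the strange identity \eqref{Hstrange}.

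I expect the main obstacle to be the first step: establishing the precise strange identity for $T(3,2^k)$ and confirming that its periodic function is a $\pm c,0$-valued even function expressible through the $\chi^{(n,m)}_{2st}$ family with indices landing in $\mathscr{D}(s,t)$. For $T(2,2u+1)$ this is essentially in the literature (Hikami, Lovejoy--Osburn), but for the family $T(3,2^k)$ one must either cite or work out the cyclotomic expansion of the $(3,b)$-torus-knot Kashaev invariant, run it through the Mellin-transform asymptotics to read off the theta series, and then verify the combinatorial matching with \eqref{pf}; the restriction to $b=2^k$ (rather than arbitrary $b$ coprime to $3$) is presumably exactly what makes the periodic function fall into the allowed three-valued shape, so that constraint should be used rather than circumvented.
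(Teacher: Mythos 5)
Your proposal follows essentially the same route as the paper: both families are matched to periodic functions $\chi_{2st}^{(n,m)}$ via known strange identities (Hikami's identity for $T(2,2u+1)$ and, for $T(3,2^k)$, the identity of~\cite{Be}, which resolves the one obstacle you flag), and Conjecture~\ref{cgconj} then follows from Theorems~\ref{main} and~\ref{main2} exactly as you describe. Two small corrections: the modulus for $T(2,2u+1)$ is $2st=4(2u+1)$, not $2(2u+1)$ (for the trefoil this is the conductor $12$ of $\bigl(\tfrac{12}{\cdot}\bigr)$), and the paper's choice $(n,m)=(2,1)$ for $T(3,2^k)$ is identified with $(1,2^k-1)\in\mathscr{D}_3(3,2^k)$ via Lemma~\ref{scha}, consistent with your observation that $n=1$ suffices.
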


Corollary \ref{cor} generalizes Theorems 2.5 and 3.1 in~\cite{cg} and upon taking either $u=1$ or $k=1$ confirms Conjecture~\ref{cgconj} for the trefoil knot $3_1 = T(2,3)=T(3,2)$. 

The paper is organized as follows. In Section~\ref{sec:proof-main-thm}, we prove Theorem~\ref{main} by first providing an explicit evaluation of $G_{f}(p)$ in \eqref{BT}, which generalizes \eqref{g31}, and then computing $S_{f}^{\textbf{med}}(x)$ directly via a careful contour deformation argument. In Section \ref{sec:torus}, we prove Theorem~\ref{main2} and Corollary~\ref{cor}. In Section~\ref{sec:remarks}, we make some concluding remarks.

\section{Proof of Theorem \ref{main}}\label{sec:proof-main-thm}

\begin{proof}[Proof of Theorem \ref{main}]
The Borel transform of $\mathcal{F}_f(1/x)$ from \eqref{F} is given by  
\begin{equation*}
\mathcal{B}[\mathcal{F}_{f}](p) = C_M \delta + G_f(p) 
\end{equation*}
where
\begin{equation*}
C_M = -\frac{M}{2} \sum_{m=1}^{M} f(m) B_{2}\left(\dfrac{m}{M}\right)
\end{equation*}
and
\begin{equation} \label{gfp}
\begin{aligned}
G_f(p) & = \sum_{n=1}^{\infty} \frac{(-1)^{n+1} M^{2n+1}}{(2n+2) n!} \sum_{m=1}^{M} f(m) B_{2n+2}\left(\dfrac{m}{M}\right) \frac{p^{n-1}}{(n-1)! b^n} \\
& = M \sum_{n=0}^{\infty} \sum_{m=1}^{M} \frac{(-1)^n f(m) B_{2n+4}\left(\dfrac{m}{M}\right)}{(2n+4)!} \frac{(2n+3)!}{n! (n+1)!} \left( \frac{M^2}{b} \right)^{n+1} p^n.
\end{aligned}
\end{equation}
From the generating function
\begin{eqnarray*}\label{Berpolygen}
\dfrac{xe^{xt}}{e^x-1}=\sum_{n=0}^\infty\dfrac{B_n(t)}{n!}x^n, 
\end{eqnarray*}
where $|x|<2\pi$, we deduce
\begin{eqnarray} \label{b}
\sum_{n=0}^{\infty} \dfrac{B_{2n}(t)}{(2n)!}x^{2n}=\dfrac{x}{2}\left(\dfrac{e^{xt}}{e^x-1}-\dfrac{e^{-xt}}{e^{-x}-1}\right)=\dfrac{x(e^{xt}+e^{-(t-1)x})}{2(e^{x}-1)}.\label{evenBer}   
\end{eqnarray}
Assuming $|y|<\frac{2\pi}{M}$, $x=iMy$, and taking $t=\frac{m}{M}$, where $1\leq m\leq M$, we use (\ref{b}) to obtain
\begin{align} \label{yep}
\dfrac{1}{My}\sum_{m=1}^Mf(m)\sum_{n=0}^\infty\dfrac{B_{2n+2}\left(\frac{m}{M}\right)}{(2n+2)!}\left(iMy\right)^{2n+2}&=\dfrac{i}{2(e^{iMy}-1)}\sum_{m=1}^{M} f(m)\left(e^{imy}+e^{-i(m-M)y} \right) \notag \\
&=-2c\cdot\dfrac{\sin\left(\frac{(k_2-k_1)y}{2}\right)\sin\left(\frac{(M-k_1-k_2)y}{2}\right)}{\sin\left(\frac{My}{2}\right)}.
\end{align}
If we let $y=\sqrt{p}$ in (\ref{yep}), then
\begin{align} 
\notag g_{1,f}(p)&:=M^{3} \sum_{n=0}^{\infty}\sum_{m=1}^M f(m)\dfrac{B_{2n+4}\left(\frac{m}{M}\right)}{(2n+4)!}\,  (-M^2p)^{n}\\
&= \frac{1}{Mp^2}\sum_{n=0}^{\infty}\sum_{m=1}^M f(m)\dfrac{B_{2n+2}\left(\frac{m}{M}\right)}{(2n+2)!}\,  (-M^2p)^{n+1} - \frac{C_M}{p}\notag\\
&\label{f1}= -\dfrac{1}{p^{3/2}}\left(2c\cdot\dfrac{\sin\left(\frac{(k_2-k_1)\sqrt{p}}{2}\right)\sin\left(\frac{(M-k_1-k_2)\sqrt{p}}{2}\right)}{\sin\left(\frac{M\sqrt{p}}{2}\right)} + C_M\sqrt{p}\right).
\end{align}
We conclude 
\begin{eqnarray*}
G_f(p)=(g_{1,f}\circledast g_{2,f})(p)    
\end{eqnarray*}
where $g_{1,f}$ was computed in \eqref{f1} and 
\begin{align*}
g_{2,f}(p)&:=\sum_{n=0}^{\infty} \dfrac{(2n+3)!}{n!(n+1)!}\Big(\dfrac{1}{b}\Big)^{n+1} p^{n} =\dfrac{1}{b}\dfrac{6}{ \left(1-4\frac{p}{b} \right)^{5/2}} 
\end{align*}
and $\circledast$ denotes the Hadamard product of two formal power series
\begin{eqnarray*}
\Biggl( \sum_{n=0}^{\infty} a_n p^n \Biggr) \circledast \Biggl( \sum_{n=0}^{\infty} b_n p^n \Biggr) := \sum_{n=0}^{\infty} a_n b_n p^n.
\end{eqnarray*}
It now follows that
\begin{eqnarray}\label{int}
G_f(p)=\dfrac{1}{2\pi i}\int_{-\gamma} g_{1,f}(s)g_{2,f}\left(\frac{p}{s}\right)\dfrac{ds}{s}    
\end{eqnarray}
where $\gamma$ is a circle with center at the origin and a small radius oriented clockwise (see Figure \ref{fig:fig2}). 
 \begin{figure}[h]
    \centering
\tikzset{every picture/.style={line width=0.75pt}} 

\begin{tikzpicture}[x=0.75pt,y=0.75pt,yscale=-1,xscale=1]

\draw [color={rgb, 255:red, 155; green, 155; blue, 155 }  ,draw opacity=1 ]   (200,40.17) -- (200,221) ;
\draw [color={rgb, 255:red, 155; green, 155; blue, 155 }  ,draw opacity=1 ]   (110.67,130.83) -- (290.33,130.17) ;
\draw    (210.5,81) .. controls (271.33,91.17) and (260.5,180.75) .. (199.5,180.75)(200,180.75) .. controls (139.5,180.75) and (131,91) .. (190,81) ;
\draw    (210.33,59.5) .. controls (300,69.83) and (300,200.5) .. (199.67,200.83) .. controls (99.33,201.17) and (101.33,70.83) .. (190,59.83) ;
\draw    (190,59.83) -- (190,81) ;
\draw    (210.33,59.5) -- (210.5,81) ;
\draw  [fill={rgb, 255:red, 0; green, 0; blue, 0 }  ,fill opacity=1 ] (162.33,95.56) -- (154.95,96.77) -- (162.76,103.03) -- cycle ;
\draw  [fill={rgb, 255:red, 0; green, 0; blue, 0 }  ,fill opacity=1 ] (190,67.63) -- (184.99,73.2) -- (195,73.2) -- cycle ;
\draw  [fill={rgb, 255:red, 0; green, 0; blue, 0 }  ,fill opacity=1 ] (132.03,165.45) -- (133.44,158.09) -- (124.87,163.27) -- cycle ;
\draw  [fill={rgb, 255:red, 0; green, 0; blue, 0 }  ,fill opacity=1 ] (267.39,95.88) -- (265.27,103.06) -- (274.31,98.76) -- cycle ;
\draw  [fill={rgb, 255:red, 0; green, 0; blue, 0 }  ,fill opacity=1 ] (210.42,73.25) -- (215.42,67.68) -- (205.41,67.68) -- cycle ;
\draw  [fill={rgb, 255:red, 0; green, 0; blue, 0 }  ,fill opacity=1 ] (235.74,166.06) -- (242.97,164.14) -- (234.59,158.66) -- cycle ;

\draw (125,182.4) node [anchor=north west][inner sep=0.75pt]    {$\gamma _{R}$};

\draw (160,150.4) node [anchor=north west][inner sep=0.75pt]    {$\gamma$};

\draw (220,176.4) node [anchor=north west][inner sep=0.75pt]    {};
\end{tikzpicture}

    \caption{Contour deformation of $\gamma_R - \gamma$}
    \label{fig:fig2}
\end{figure}

\noindent Now, we consider the poles of $g_{1,f}(s)$. Note that there is no pole at $s=0$. By \eqref{f1}, they are supported where $p=\frac{4\ell^2\pi^2}{M^2}$, $\ell \in \mathbb{Z}$. However, 
\begin{eqnarray*}
\sin\left(\frac{(k_2-k_1)\sqrt{p}}{2}\right)\sin\left(\frac{(M-k_1-k_2)\sqrt{p}}{2}\right)=0
\end{eqnarray*}
when $p=\dfrac{4k^2\pi^2}{(k_2-k_1)^2}$ or $p=\dfrac{4k^2\pi^2}{(M-k_2-k_1)^2}$ for $k\in\mathbb{Z}$. Thus, if for some $k, \ell\in\mathbb{Z}$
\begin{eqnarray*}
\ell=\dfrac{Mk}{k_2-k_1} \quad\text{or}\quad \ell=\dfrac{Mk}{M-k_2-k_1}
\end{eqnarray*}
which is true if and only if
\begin{eqnarray*} \label{coml}
\ell = \dfrac{\frac{M}{\text{gcd}(M, \, k_2-k_1)}k}{\frac{(k_2-k_1)}{\text{gcd}(M,\, k_2-k_1)}}\quad\text{or}\quad \ell=\dfrac{\frac{M}{\text{gcd}(M,\, M-k_2-k_1)}k}{\frac{(M-k_2-k_1)}{\text{gcd}(M,\, M-k_2-k_1)}},
\end{eqnarray*}
then $g_{1,f}(s)$ does not have poles at $p=\frac{4\ell^2\pi^2}{M^2}$. Thus, the set of poles of $g_{1,f}(s)$ is given by
\begin{eqnarray}\label{sing}
\mathcal{N}_{1,f}=\left\{\frac{4\ell^2\pi^2}{M^2} :  \ell\in\mathbb{N}, \frac{M}{\text{gcd}(M,k_2-k_1)}\nmid\ell\; \; \text{or} \;\; \frac{M}{\text{gcd}(M,M-k_2-k_1)}\nmid\ell\right\}. 
\end{eqnarray}
We next carefully calculate the residues of the poles of the integrand \eqref{int} which are given by \eqref{sing}. Take a circle $\gamma_R$ with center at the origin and of large radius $R$ which encloses a finite number of points $N$ in $\mathcal{N}_{1,f}$. Thus,

\begin{align}  \label{largeR}
& \dfrac{1}{2\pi i}\int_{\gamma_R \, \cup \, \gamma} g_{1,f}(s)g_{2,f}\left(\frac{p}{s}\right)\dfrac{ds}{s} = \sum_{\substack{p_i\in\mathcal{N}_{1,f}\\1\leq i\leq N}}\text{Res}_{s\to p_i}\left(\dfrac{g_{1,f}(s)g_{2,f}(\tfrac{p}{s})}{s}\right) \nonumber \\
&= \sideset{}{^N} \sum_{\ell \in \mathbb{N}} \dfrac{g_{2,f}\left(\frac{p M^2}{4\ell^2\pi^2}\right)}{\left(\frac{4\ell^2\pi^2}{M^2}\right)}\nonumber  \\
& \quad \quad \quad \quad \times \lim_{s\rightarrow \frac{4\ell^2\pi^2}{M^2}}\left(s-\frac{4\ell^2\pi^2}{M^2}\right)\left[-\dfrac{1}{s^{3/2}} \left(2c\cdot\dfrac{\sin\left(\frac{(k_2-k_1)\sqrt{s}}{2}\right)\sin\left(\frac{(M-k_1-k_2)\sqrt{s}}{2}\right)}{\sin\left(\frac{M\sqrt{s}}{2}\right)} + C_M\sqrt{s}\right)\right] \nonumber \\
&=-\dfrac{3\pi c}{bM^2}\sideset{}{^N}\sum_{\ell\in\mathbb{N}}\dfrac{\ell \tilde{f}(\ell)}{\left(\frac{\ell^2\pi^2}{M^2}-\frac{p}{b}\right)^{5/2}} 
\end{align}
where $\sideset{}{^N}\sum$ means that $\ell$ runs over the first $N$ elements $\frac{4\ell^2\pi^2}{M^2}$ of $\mathcal{N}_{1,f}$ and $\tilde{f}(\ell)$ is given by (\ref{ftilde}). It follows that 
\begin{eqnarray} \label{lim}
\left|\int_{\gamma_R}g_{1,f}(s)g_{2,f}\left(\frac{p}{s}\right)\dfrac{ds}{s}\right|\rightarrow 0 
\end{eqnarray}
as $R\to\infty$. Thus, \eqref{int}, \eqref{largeR}, \eqref{lim} and the contour deformation given in Figure \ref{fig:fig2} imply that
\begin{eqnarray} \label{BT}
G_f(p)=\dfrac{3\pi c}{M^2b} \sum_{\ell =1}^{\infty} \dfrac{\ell \tilde{f}(\ell)}{\left(\frac{\ell^2\pi^2}{M^2}-\frac{p}{b}\right)^{5/2}}.    
\end{eqnarray}

The set of poles of $G_f(p)$ is then
\begin{eqnarray*} \label{poles}
\mathcal{N}:=\left\{\frac{b\ell^2\pi^2}{M^2} : \ell\in\mathbb{N}, \frac{M}{\text{gcd}(M,k_2-k_1)}\nmid\ell\;\text{or}\;\frac{M}{\text{gcd}(M,M-k_2-k_1)}\nmid\ell\right\}. 
\end{eqnarray*}
Since $G_f(p)$ has two branches, by considering the cut complex plane $\mathbb{C}\setminus \mathcal{N}$, it can be made analytic in this region. This implies (1). We now compute $S^{\pm}[\mathcal{F}_f](x)$:
\begin{align}\label{eq:lateral_BL}
S^{\pm}[\mathcal{F}_f](x)=\int_0^{e^{\pm i\theta}\infty}e^{-px}\mathcal{B}[\mathcal{F}_f](p) \, dp&= C_M + \frac{3\pi c}{M^2b}\sum_{\ell =1}^{\infty} \ell \tilde{f}(\ell)\int_0^{e^{\pm i\theta}\infty} \frac{e^{-px}}{\Big(\frac{\ell^2\pi^2}{M^2}-\frac{p}{b}\Big)^{5/2}} dp
\end{align}
where the integral $\int_0^{e^{+ i\theta}\infty}$ agrees with $\int_0^{+i\infty}$ if $\Im (x)>0$ and the integral $\int_0^{e^{- i\theta}\infty}$ agrees with $\int_0^{-i\infty}$ if $\Im (x)<0$.  To prove the first part of (2), we start with the following formula, valid for $\Re(x)>0$:
\begin{equation}\label{errorf}
\int_{\gamma}\dfrac{e^{-px}}{(1-p)^{\frac{5}{2}}} \, dp=-\dfrac{4}{3}+\dfrac{8}{3}\sqrt{\pi} \mathcal{E}(\sqrt{x})    
\end{equation}
where $\gamma=[0,e^{i\theta} \infty) \cup [0,e^{-i\theta}\infty)$ and 
\begin{eqnarray*}
\mathcal{E}(y):=\dfrac{2y^3e^{-y^2}}{\sqrt{\pi}}\int_0^ye^{t^2}dt-\dfrac{y^2}{\sqrt{\pi}}.    
\end{eqnarray*}
From (\ref{smed}) and \eqref{eq:lateral_BL}, we have
\begin{align}\label{Smed}
S^{\textbf{med}}_f(x) &= C_M + \frac{1}{2} \frac{3\pi c}{M^2b}\sum_{\ell =1}^{\infty} \ell \tilde{f}(\ell) \int_{\gamma} \frac{e^{-px}}{\Big(\frac{\ell^2\pi^2}{M^2}-\frac{p}{b}\Big)^{5/2}}dp\notag\\
&= C_M + b\left(\dfrac{M}{\pi}\right)^3\dfrac{3\pi c}{2 M^2 b} \sum_{\ell =1}^{\infty} \dfrac{\tilde{f}(\ell)}{\ell} \int_{\gamma}\dfrac{e^{-\frac{bp\ell^2\pi^2 x}{M^2}}}{\left(1-p\right)^{5/2}} \, dp
\end{align}
where we have made the change of variable $p\rightarrow \frac{bp\ell^2\pi^2}{M^2}$. Thus, \eqref{errorf} and \eqref{Smed} imply
\begin{align*}
S^{\textbf{med}}_f(x) &= C_M + \dfrac{3Mc}{2 \pi^2} \sum_{\ell =1}^{\infty} \dfrac{\tilde{f}(\ell)}{\ell} \left\{-\dfrac{4}{3}+\dfrac{8}{3}\sqrt{\pi} \mathcal{E}\left(\dfrac{\ell\pi}{M}\sqrt{bx}\right)\right\}\notag\\
&=\dfrac{4Mc}{\pi^{3/2}} \sum_{\ell =1}^{\infty} \dfrac{\tilde{f}(\ell)}{\ell} \, \mathcal{E}\left(\dfrac{\ell\pi}{M}\sqrt{bx}\right)
\end{align*}
where one can check that
\begin{equation} \label{cm}
C_M = \frac{2Mc}{\pi^2} \sum_{\ell=1}^{\infty} \frac{\tilde{f}(\ell)}{\ell^2}.
\end{equation}
This implies that $S_f^{\textbf{med}}(x)$ is analytic on $\Re(x) > 0$. 

To prove the existence of the radial limit and part (3), we begin with rewriting the integral on the right-hand side of \eqref{eq:lateral_BL} as an integral of the partial theta function $\theta_{0,4M^2,\tilde{f}}^{(0)}(x)$. This requires some attention due to the branch cut singularity of the resulting square root. Integration by parts yields
\begin{align} \label{parts}
 \int_0^{e^{\pm i\theta}\infty} \frac{e^{-px}}{\Big(\frac{\ell^2\pi^2}{M^2}-\frac{p}{b}\Big)^{5/2}} dp&=
\left [ \frac{2b}{3}\frac{e^{-px}}{\Big(\frac{\ell^2\pi^2}{M^2}-\frac{p}{b}\Big)^{3/2}} \right ]_0^{e^{\pm i\theta}\infty}+\frac{2bx}{3}\int_0^{e^{\pm i\theta}\infty} \frac{e^{-px}}{\Big(\frac{\ell^2\pi^2}{M^2}-\frac{p}{b}\Big)^{3/2}} dp \notag \\
&=\frac{2b}{3} \left [-\frac{M^3}{\ell^3\pi^3}+x\int_0^{xe^{\pm i\theta}\infty} \frac{e^{-\frac{\ell^2\pi^2 b}{M^2}p}}{\Big(\frac{\ell^2\pi^2}{M^2}-\frac{\ell^2\pi^2}{M^2x}p\Big)^{3/2}} d\left( \frac{\ell^2\pi^2b}{M^2x}p \right) \right ] \notag \\
&= -\frac{2bM^3}{3\ell^3\pi^3} + \frac{2b^2 M}{3 \ell \pi} \int_0^{xe^{\pm i\theta}\infty} \frac{e^{-\frac{\ell^2\pi^2b}{M^2}p}}{\Big(1-\frac{p}{x}\Big)^{3/2}} dp.
\end{align}
The change of coordinate $x \rightarrow -\frac{1}{2\pi i\alpha}$ where $0 \neq \alpha \in \mathbb{C}$ to (\ref{parts}) yields
\begin{align} \label{change}
 -\frac{2bM^3}{3\ell^3\pi^3}  + \frac{2b^2 M}{3 \ell \pi} \int_0^{xe^{\pm i\theta}\infty} \frac{e^{-\frac{\ell^2\pi^2b}{M^2}p}}{\Big(1-\frac{p}{x}\Big)^{3/2}} dp 
&=-\frac{2bM^3}{3\ell^3\pi^3} + \frac{2b^2 M}{3 \ell \pi} \int_0^{-\frac{1}{2\pi i\alpha}e^{\pm i\theta}\infty} \frac{e^{2\pi i\frac{\ell^2 2\pi i b}{4M^2}p}}{\big(1+2\pi i p\alpha\big)^{3/2}} dp \notag \\
&=-\frac{2bM^3}{3\ell^3\pi^3} + \frac{b^2M}{3i\ell \pi^2} \int_0^{-\frac{1}{\alpha}e^{\pm i\theta}\infty} \frac{e^{2\pi i\frac{\ell^2  b}{4M^2}p}}{\big(1+ p\alpha\big)^{3/2}} dp.
\end{align}
Now $(1+p\alpha)^{3/2}$ has a branch cut at $p=-1/\alpha$, hence we distinguish between $\Re(\alpha)>0$ and $\Re(\alpha)<0$ (see Figure~\ref{fig:branch}). If $\Re(\alpha)>0$, then $(1+p\alpha)^{3/2}=\alpha^{3/2}(1/\alpha+p)^{3/2}$, and the integral at angle $-i\theta$ will not change sign as the path is not crossing the branch cut, i.e., $\int_0^{-\frac{1}{\alpha}e^{ -i\theta}\infty}=\int_0^{+i\infty}$. If $\Re(\alpha)<0$, then $(1+p\alpha)^{3/2}=\alpha^{3/2}(1/\alpha+p)^{3/2}$, but the integral at angle $+i\theta$ will change sign as the path is crossing the branch cut, i.e., $\int_0^{-\frac{1}{\alpha}e^{+i\theta}\infty}=-\int_0^{+i\infty}$.
\begin{figure}[ht]
\center
\begin{tikzpicture}
\draw (-7,0)--(-3,0);
\draw (-5,-2)--(-5,2);
\draw[thick] (-5,0)--(-3,0.5) node[pos=0.5,  
    xscale=-1,
    sloped] {\tiny{$\blacktriangleleft$}};;
\draw[thick] (-5,0)--(-3,2) node[pos=0.5,  
    xscale=-1,
    sloped] {\tiny{$\blacktriangleleft$}};
\draw[decorate, decoration={snake}] (-4.5,0.3)--(-7,0.3) node[above,pos=-0.2]{$-\frac{1}{\alpha}$};
\node at (-4.45,0.3) {$\bullet$};
\draw (-2,0)--(2,0);
\draw (0,-2)--(0,2);
\draw[thick] (0,0)--(-2,0.5) node[pos=0.8,  
    xscale=-1,
    sloped] {\tiny{$\blacktriangleright$}};
    \draw[thick] (0,0)--(-2,2) node[pos=0.5,  
    xscale=-1,
    sloped] {\tiny{$\blacktriangleright$}};
\draw[decorate, decoration={snake}] (-0.55,0.3)--(-2,0.3) node[above,pos=-0.2]{$-\frac{1}{\alpha}$};
\node at (-0.55,0.3) {$\bullet$};
\end{tikzpicture}
\caption{Branch cut of $(1+p\alpha)^{3/2}$. On the left for $\Re(\alpha)<0$ and on the right for $\Re(\alpha)>0$.}
\label{fig:branch}
\end{figure}

\noindent By \eqref{eq:lateral_BL}, \eqref{cm} and \eqref{change}, we have for $\alpha\in\Q_{>0}$
\begin{align*}
\int_0^{e^{-i\theta}\infty} e^{\frac{p}{2\pi i\alpha}}\, \mathcal{B}[\mathcal{F}_f](p) \,dp &=-i\frac{c b }{M\pi}\sum_{\ell =1}^{\infty}\tilde{f}(\ell) \int_0^{-\frac{1}{\alpha}e^{- i\theta}\infty} \frac{e^{2\pi i\frac{\ell^2  b}{4M^2}p}}{\big(1+ p\alpha\big)^{3/2}} dp \nonumber \\
&=-i\frac{ c b }{M\pi \alpha^{3/2}}\sum_{\ell =1}^{\infty} \tilde{f}(\ell) \int_0^{+i\infty} \frac{e^{2\pi i\frac{\ell^2  b}{4M^2}p}}{\big(1/\alpha+ p\big)^{3/2}} dp \nonumber \\
&=-i\frac{c b }{M\pi \alpha^{3/2}}\int_0^{+i\infty} \frac{\theta_{0,4M^2,\tilde{f}}^{(0)}(bp)}{\big(1/\alpha+ p\big)^{3/2}} dp
\end{align*}
while for $\alpha\in\Q_{<0}$, we obtain
\begin{align*}
\int_0^{e^{+ i\theta}\infty} e^{\frac{p}{2\pi i\alpha}}\, \mathcal{B}[\mathcal{F}_f](p) \,dp &=-i\frac{c b }{M\pi}\sum_{\ell =1}^{\infty} \tilde{f}(\ell) \int_0^{-\frac{1}{\alpha}e^{+i\theta}\infty} \frac{e^{2\pi i\frac{\ell^2  b}{4M^2}p}}{\big(1+ p\alpha\big)^{3/2}} dp \nonumber \\
&= i\frac{c b }{M\pi \alpha^{3/2}} \sum_{\ell=1}^{\infty} \tilde{f}(\ell) \int_0^{+i\infty} \frac{e^{2\pi i\frac{\ell^2  b}{4M^2}p}}{\big(1/\alpha+ p\big)^{3/2}} dp \nonumber \\
&=i\frac{c b }{M\pi \alpha^{3/2}}\int_0^{+i\infty} \frac{\theta_{0,4M^2,\tilde{f}}^{(0)}(bp)}{\big(1/\alpha+ p\big)^{3/2}} dp.
\end{align*}
We now compute the discontinuity $\mathrm{disc}^0(x)$ across the positive real axis:
\begin{equation} \label{disc}
\begin{aligned}
\mathrm{disc}^0(x) &:= S^{+}[\mathcal{F}_f](x) - S^{-}[\mathcal{F}_f](x)  \\
&=\frac{3\pi c}{M^2b}\sum_{\ell =1}^{\infty} \ell \tilde{f}(\ell) \left[\int_0^{e^{+i\theta}\infty}\frac{e^{-px}}{\big(\frac{\ell^2\pi^2}{M^2}-\frac{p}{b}\big)^{5/2}}dp - \int_0^{e^{-i\theta}\infty}\frac{e^{-px}}{\big(\frac{\ell^2\pi^2}{M^2}-\frac{p}{b}\big)^{5/2}}dp\right] \\
&= \frac{3\pi c}{M^2b}\sum_{\ell =1}^{\infty} \ell \tilde{f}(\ell) \int_{\mathcal{C}} \frac{e^{-px}}{\big(\frac{\ell^2\pi^2}{M^2}-\frac{p}{b}\big)^{5/2}}dp \\
&= \frac{3\pi cb^{3/2}}{M^2}\sum_{\ell =1}^{\infty} \ell \tilde{f}(\ell) e^{-\frac{\ell^2\pi^2 b}{M^2}x} \int_{\mathcal{C}_{\ell}} \frac{e^{-px}}{(-p)^{5/2}}\, dp \\
&=\frac{3\pi cb^{3/2}\,}{M^2} \sum_{\ell =1}^{\infty} \ell \tilde{f}(\ell) e^{-\frac{\ell^2\pi^2 b}{M^2}x} \int_{\mathcal{C}_{\ell}} \frac{e^{-px}}{(-xp)^{5/2}}\, d(xp) x^{3/2} \\
&=\frac{2\pi i}{\Gamma\big(\frac{5}{2}\big)}x^{3/2}\frac{3\pi cb^{3/2}}{M^2} \sum_{\ell =1}^{\infty}  \ell \tilde{f}(\ell) e^{-\frac{\ell^2\pi^2 b}{M^2}x} \\
&=2i\, (2b \pi x)^{3/2}\frac{ \sqrt{2} c }{M^2} \sum_{\ell=1}^{\infty}  \ell \tilde{f}(\ell) e^{-\frac{\ell^2\pi^2 b}{M^2}x} \\
& = 2i\, (2b \pi x)^{3/2}\frac{ \sqrt{2} c }{M^2} \theta^{(1)}_{0,4M^2,\tilde{f}}(2\pi ix). 
\end{aligned}
\end{equation}
Here,  $\mathcal{C}_\ell=\mathcal{C}-\tfrac{\ell^2\pi^2b}{M^2}$ where $\mathcal{C}$ is the Hankel contour enclosing the branch point singularity arising from the second line in \eqref{disc}, as in Figure \ref{fig:Hankel}. The radial limit $x \to -\frac{1}{2\pi i\alpha}$ of the partial theta series appearing in the last line of \eqref{disc} exists by \cite[Proposition 3.1]{akl} and so
\begin{equation*}
\mathrm{disc}^0\Big(-\frac{1}{2\pi i\alpha}\Big) = 2i\, \Big(\frac{ib}{\alpha} \Big)^{3/2}\frac{ \sqrt{2} c }{M^2}\, \theta_{0,4M^2,\tilde{f}}^{(1)}\left(-\frac{b}{\alpha}\right).
\end{equation*}

\begin{figure}[ht]
\center
\begin{tikzpicture}
\draw[->] (-7,0)--(-3,0);
\draw[->] (-5,-2)--(-5,2);
\node[red] at (-4.5,0) {$\bullet$};
\node[red] at (-4,0) {$\bullet$};
\node[red] at (-3.5,0) {$\bullet$};
\draw[thick,color={rgb, 255:red, 80; green, 227; blue, 194 }  ,opacity=1] (-5,0)--(-3,0.5) node[pos=0.5,  
    xscale=-1,
    sloped] {$<$};;
\draw[thick,color={rgb, 255:red, 74; green, 144; blue, 226 }  ,opacity=1] (-5,0)--(-3,-0.5) node[pos=0.5,  
    xscale=-1,
    sloped] {$<$};
\node[above,font=\small] at (-3,0.5) {$S^{+}[\mathcal{F}_f]$};
\node[below,font=\small] at (-3,-0.5) {$S^{-}[\mathcal{F}_f]$};
\draw[->] (-2,0)--(2,0);
\draw[->] (0,-2)--(0,2);
\node[red] at (0.5,0) {$\bullet$};
\node[red] at (1,0) {$\bullet$};
\node[red] at (1.5,0) {$\bullet$};
\draw[thick] (0,0.5)--(2,0.5) node[pos=0.5,  
    xscale=-1,
    sloped] {$<$};;
\draw[thick] (0,-0.5)--(2,-0.5) node[pos=0.5,  
    xscale=-1,
    sloped] {$>$};
 \draw[thick] (0,0.5) arc (90:270:0.5); 
 \node[font=\small] at (2,0.75) {$\mathcal{C}$};  
\end{tikzpicture}
\caption{The Hankel contour $\mathcal{C}$ for the discontinuity $\mathrm{disc}^0(x)$.}
\label{fig:Hankel}
\end{figure}
\noindent In addition, the radial limit $x \to -\frac{1}{2\pi i\alpha}$ also exists for $S^{\pm}[\mathcal{F}_f](x)$ since they are analytic functions on the half-planes $\Re(e^{\pm i\theta}x)>0$, respectively (see the change of coordinates after \eqref{parts}). 
Thus, the second part of (2) follows. Summarizing, we find that 
\[
{S}^{\mathbf{med}}_f\left(-\frac{1}{2\pi i\alpha}\right)
=\begin{cases}
S^{+}\left(-\dfrac{1}{2\pi i\alpha}\right)+\, \Big(\dfrac{b}{\alpha} \Big)^{3/2} e^{\pi i/4}\dfrac{ \sqrt{2} c }{M^2}\, \theta_{0,4M^2,\tilde{f}}^{(1)}\left(-\frac{b}{\alpha}\right)
 & \text{if $\alpha\in\Q_{>0}$}, \\
&\\
S^{-}\left(-\dfrac{1}{2\pi i\alpha}\right)-\, \Big(\dfrac{b}{\alpha} \Big)^{3/2} e^{\pi i/4} \dfrac{ \sqrt{2} c }{M^2}\, \theta_{0,4M^2,\tilde{f}}^{(1)}\left(-\frac{b}{\alpha}\right)
& \text{if $\alpha\in\Q_{<0}$}
\end{cases}
\]  
with
\[{S}^\pm\left(-\frac{1}{2\pi i\alpha}\right)=\mp i\frac{c b}{M\pi \alpha^{3/2}}\int_0^{+i\infty} \frac{\theta_{0,4M^2,\tilde{f}}^{(0)}(bp)}{\big(1/\alpha+ p\big)^{3/2}} dp.\]
Finally, noticing that $(i\alpha)^{-3/2}=\mp e^{\pi i/4}\alpha^{-3/2}$ depending on the sign of $\alpha$ and on our choice of the branch of the square root, part (3) follows.
\end{proof}

\section{Proof of Theorem~\ref{main2} and Corollary~\ref{cor}} \label{sec:torus}
In order to prove Theorem~\ref{main2} and Corollary~\ref{cor}, we require some general preliminaries. Recall the periodic function $\chi_{s,t}^{(n,m)}(k)$ given by \eqref{pf}. 

\begin{lemma}\label{scha}
For all $k\in\mathbb{Z}$, $1\le m\le t-1$ and $1\le n\le s-1$, we have
\begin{align*}
\chi_{s,t}^{(n,m)}(k)=\chi_{s,t}^{(s-n,t-m)}(k).    
\end{align*}
\end{lemma}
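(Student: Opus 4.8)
The plan is to verify the identity directly from the definition \eqref{pf}, by computing the two residue classes $\pm\big((s-n)t-(t-m)s\big)$ and $\pm\big((s-n)t+(t-m)s\big)$ modulo $2st$ that govern $\chi^{(s-n,t-m)}_{2st}$ and checking that they coincide with the classes $\pm(nt-ms)$ and $\pm(nt+ms)$ governing $\chi^{(n,m)}_{2st}$. First I would note that since $1\le n\le s-1$ and $1\le m\le t-1$, we have $1\le s-n\le s-1$ and $1\le t-m\le t-1$, so the pair $(s-n,t-m)$ lies in the admissible range and $\chi^{(s-n,t-m)}_{2st}$ is defined by \eqref{pf}.

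Next I would carry out the elementary algebra. For the value $+1$: expanding gives $(s-n)t-(t-m)s = st-nt-st+ms = -(nt-ms)$, so the set $\{k : k\equiv \pm((s-n)t-(t-m)s)\}$ equals $\{k : k\equiv \pm(nt-ms)\}$, i.e.\ exactly the $+1$-locus of $\chi^{(n,m)}_{2st}$. For the value $-1$: expanding gives $(s-n)t+(t-m)s = st-nt+st-ms = 2st-(nt+ms)\equiv -(nt+ms)\pmod{2st}$, so the set $\{k : k\equiv \pm((s-n)t+(t-m)s)\pmod{2st}\}$ equals $\{k : k\equiv \pm(nt+ms)\pmod{2st}\}$, the $-1$-locus of $\chi^{(n,m)}_{2st}$.

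Finally, since the $+1$-loci agree, the $-1$-loci agree, and both functions vanish off the union of these loci, the two periodic functions take the same value at every $k\in\mathbb{Z}$, which is the claim. I do not expect any genuine obstacle here: the only point requiring a word of care is the reduction $2st-(nt+ms)\equiv -(nt+ms)\pmod{2st}$ together with the symmetry of the ``$\pm$'' condition, which makes the $-1$-loci coincide even though the naive expressions $(s-n)t+(t-m)s$ and $nt+ms$ are not literally equal.
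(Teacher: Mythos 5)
Your proposal is correct and is essentially identical to the paper's own proof: both verify that $(s-n)t-(t-m)s=-(nt-ms)$ and $(s-n)t+(t-m)s\equiv-(nt+ms)\pmod{2st}$, and then invoke the $\pm$ symmetry in the definition \eqref{pf} to conclude the loci coincide. No issues.
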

\begin{proof}
By replacing $n\rightarrow s-n$ and $m\rightarrow t-m$, we see that
\begin{equation}\label{sym}
\begin{aligned}
\pm ((s-n)t-(t-m)s)&=\pm(ms-nt)=\mp(nt-ms)\,,\\
\pm ((s-n)t+(t-m)s)&=\pm(2st-ms-nt)\equiv \mp(nt+ms)\pmod{2st}\,,     
\end{aligned}
\end{equation}
and thus the result follows from \eqref{sym}.
\end{proof}
In view of Lemma~\ref{scha}, the map 
\begin{align} \label{bi}
(n, m)\xrightarrow[]{} \begin{cases}
(n, m)& \text{if $1\le n\le \frac{s-1}{2}, \quad 1\le m\le \frac{t-1}{2}$},\\
(s-n, t-m) &\text{otherwise}.
\end{cases}    
\end{align}
is a bijection between the pairs defined by $\mathscr{D}_1(s, t)$ and $\mathscr{D}_2(s, t)$ in \eqref{d1} and \eqref{d2}, respectively.

\begin{lemma}\label{distinct}
Let $s$ and $t$ be coprime positive integers and $\mathscr{D}(s,t)$ be the set given by (\ref{d}). Then the integers in the set 
\begin{equation*} \label{set1}
\mathscr{S} = \{\pm(nt\pm ms): (n,m)\in\mathscr{D}(s,t)\}
\end{equation*} 
are distinct and thus $| \mathscr{S} | = 2(s-1)(t-1)$.    
\end{lemma}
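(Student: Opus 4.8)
The plan is to show that no two of the integers $\pm(nt\pm ms)$, as $(n,m)$ ranges over $\mathscr{D}(s,t)$ and the two inner signs vary independently, can coincide. First I would fix a representative set: by the bijection \eqref{bi} we may assume $\mathscr{D}(s,t)=\mathscr{D}_1(s,t)$, so $1\le n\le \frac{s-1}{2}$ and $1\le m\le t-1$ (and similarly handle $\mathscr{D}_3$ by the analogous restriction). Suppose $\varepsilon_1(n_1 t+\eta_1 m_1 s)=\varepsilon_2(n_2 t+\eta_2 m_2 s)$ with $\varepsilon_i,\eta_i\in\{\pm1\}$ and $(n_i,m_i)\in\mathscr{D}(s,t)$. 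Rearranging gives $(\varepsilon_1 n_1-\varepsilon_2 n_2)t=(\varepsilon_2\eta_2 m_2-\varepsilon_1\eta_1 m_1)s$. Since $\gcd(s,t)=1$, $s\mid(\varepsilon_1 n_1-\varepsilon_2 n_2)$; but $|\varepsilon_1 n_1-\varepsilon_2 n_2|\le n_1+n_2\le s-1<s$, so $\varepsilon_1 n_1=\varepsilon_2 n_2$, forcing $n_1=n_2$ and $\varepsilon_1=\varepsilon_2$ (both $n_i$ are positive, so the signs must match). Then $\varepsilon_2\eta_2 m_2=\varepsilon_1\eta_1 m_1$, i.e. $\eta_2 m_2=\eta_1 m_1$; as $1\le m_i\le t-1<t$, we get $m_1=m_2$ and $\eta_1=\eta_2$. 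Hence all four data agree, so the map $(\varepsilon,n,\eta,m)\mapsto \varepsilon(nt+\eta ms)$ is injective on its domain, giving $|\mathscr{S}|=4\cdot\frac{(s-1)(t-1)}{2}=2(s-1)(t-1)$.

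The one subtlety is that $\mathscr{S}$ as written in the statement uses congruence classes mod $2st$ implicitly (via \eqref{pf}), or at least one should check the integers $\pm(nt\pm ms)$ are distinct as genuine integers and also remain distinct modulo $2st$, since that is how $\chi_{2st}^{(n,m)}$ is later invoked. For the mod-$2st$ claim one runs the same argument but with the equation $(\varepsilon_1 n_1-\varepsilon_2 n_2)t\equiv(\varepsilon_2\eta_2 m_2-\varepsilon_1\eta_1 m_1)s \pmod{2st}$; reducing mod $s$ and mod $t$ separately and using the size bounds $n_i\le\frac{s-1}{2}$ and $m_i\le t-1$ (so that $|\varepsilon_1 n_1-\varepsilon_2 n_2|\le s-1$ and the $m$-combination is bounded by $2(t-1)<2t$, with a little care on the factor $2$) pins down $n_1=n_2$, $m_1=m_2$ and the signs, as before. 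Here the asymmetric bounds in $\mathscr{D}_1$ versus $\mathscr{D}_2$ matter: the restriction $n\le\frac{s-1}{2}$ is exactly what keeps $n_1+n_2<s$, which is the inequality that makes the divisibility collapse to equality.

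I expect the main obstacle to be bookkeeping the sign cases cleanly — in particular ruling out the "mixed" collisions $nt+ms=\pm(n't-m's)$ where the inner signs differ, and making sure the $2st$-periodicity does not create a spurious coincidence through the factor of $2$ in the modulus. Concretely, the case $\eta_1\neq\eta_2$ needs the observation that after $n_1=n_2,\varepsilon_1=\varepsilon_2$ the relation $\eta_2 m_2=\eta_1 m_1$ with $m_1,m_2>0$ is immediately impossible unless $\eta_1=\eta_2$, which is clean over $\Z$ but over $\Z/2st\Z$ requires $|m_1\pm m_2|<2t$; since $m_i\le t-1$ this gives $|m_1+m_2|\le 2t-2<2t$, so it still works. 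Once the injectivity is established, the count is immediate and the lemma follows.
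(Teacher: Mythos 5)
Your argument is correct and is essentially the paper's proof: both reduce a putative collision to a relation $(\varepsilon_1 n_1-\varepsilon_2 n_2)t=(\varepsilon_2\eta_2 m_2-\varepsilon_1\eta_1 m_1)s$, use $\gcd(s,t)=1$ to force $s\mid(\varepsilon_1 n_1-\varepsilon_2 n_2)$, and then invoke the bound $n_i\le\frac{s-1}{2}$ (so the difference has absolute value at most $s-1$) to collapse everything to equality; the paper merely absorbs your explicit signs $\varepsilon,\eta$ into extended ranges $-\frac{s-1}{2}\le n\le\frac{s-1}{2}$, $-(t-1)\le m\le t-1$. Your additional check that the elements remain distinct modulo $2st$ is not in the paper's proof of the lemma but is exactly what is needed when the lemma is applied to the set $\mathscr{T}$ in Proposition \ref{gentor}, so it is a worthwhile supplement rather than a deviation.
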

\begin{proof}
We assume that $s$ and $t$ are coprime odd integers. A similar argument applies when $s$ and $t$ are of opposite parities. Let us consider two integers of the forms $n_1t+m_1s$ and $n_2t+m_2s$ where $-\frac{s-1}{2}\le n_1, n_2\leq \frac{s-1}{2}$ and $-(t-1)\le m_1, m_2\leq t-1$. Then 
\begin{align}\label{liceq}
n_1t+m_1s=n_2t+m_2s\iff (n_1-n_2)t+(m_1-m_2)s=0.    
\end{align}
Since $s$ and $t$ are coprime, \eqref{liceq} implies that $s\mid (n_1-n_2)$, which yields $|n_1-n_2|\ge s$. On the other hand, since $-\frac{s-1}{2}\le n_1, n_2\leq \frac{s-1}{2}$, we obtain $-(s-1)\leq n_1-n_2\leq s-1$, or equivalently, $|n_1-n_2|\le s-1$, a contradiction. This shows that $n_1=n_2$ and thus \eqref{liceq} implies that $m_1=m_2$.
\end{proof}

Define
\begin{align*}
S_{n, \, m}^{n',\,m'}=\sqrt{\dfrac{8}{st}}(-1)^{nm'+mn'+1}\sin\left(\frac{nn't}{s}\pi\right)\sin\left(\frac{mm's}{t}\pi\right).    
\end{align*}

\begin{proposition}\label{gentor}
For all $k\in\mathbb{Z}$, and $(n, m)\in \mathscr{D}(s, t)$, we have
\begin{equation} \label{keyform}
\tilde{\chi}_{s,t}^{(n,m)}(k)=-\sqrt{\dfrac{st}{8}}\sum_{(n',\,m')\in \mathscr{D}(s, t)} S_{n, \, m}^{n',\,m'} \chi_{s,t}^{(n',\,m')}(k).    
\end{equation}
\end{proposition}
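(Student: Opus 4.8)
The plan is to reduce \eqref{keyform} to a finite, residue-by-residue verification modulo $2st$, using Lemma~\ref{distinct} to pin down the common support of the two sides. First I would unwind the definitions: substituting $M=2st$, $k_1=nt-ms$, $k_2=nt+ms$ into \eqref{ftilde} and using $k_2-k_1=2ms$, $M-k_1-k_2=2t(s-n)$, the left-hand side becomes
\[
\tilde{\chi}_{2st}^{(n,m)}(\ell)=(-1)^{\ell}\sin\left(\frac{m\ell\pi}{t}\right)\sin\left(\frac{(s-n)\ell\pi}{s}\right),
\]
which is an even function of $\ell$ of period $2st$ since $\gcd(s,t)=1$. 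On the right-hand side the factors $\sqrt{st/8}$ and $\sqrt{8/st}$ cancel and $-(-1)^{nm'+mn'+1}=(-1)^{nm'+mn'}$, so \eqref{keyform} is equivalent to showing that for all $k$,
\[
\tilde{\chi}_{2st}^{(n,m)}(k)=\sum_{(n',m')\in\mathscr{D}(s,t)}(-1)^{nm'+mn'}\sin\left(\frac{nn't\pi}{s}\right)\sin\left(\frac{mm's\pi}{t}\right)\chi_{2st}^{(n',m')}(k).
\]
As both sides are even and $2st$-periodic in $k$, it is enough to check this on one representative of each residue class.

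Next I would identify the support. By Lemma~\ref{distinct} the residues $\pm(n't\pm m's)\bmod 2st$, as $(n',m')$ ranges over $\mathscr{D}(s,t)$, are $2(s-1)(t-1)$ distinct classes; since $\gcd(s,t)=1$ and $0<n'<s$, $0<m'<t$, each such residue $k$ satisfies $s\nmid k$ and $t\nmid k$, and a direct count shows there are exactly $2st-2s-2t+2=2(s-1)(t-1)$ classes in $\mathbb{Z}/2st\mathbb{Z}$ with this property. Hence these residues are precisely the classes $k$ with $s\nmid k$ and $t\nmid k$. On the complementary classes every $\chi_{2st}^{(n',m')}$ vanishes, so the right-hand side is zero; the left-hand side is then also zero, because for such $k$ one of the two sine factors of $\tilde{\chi}_{2st}^{(n,m)}(k)$ is a sine of an integer multiple of $\pi$. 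On each remaining class exactly one pair $(n',m')\in\mathscr{D}(s,t)$ and one sign pattern survives on the right, with $\chi_{2st}^{(n',m')}(k)=+1$ when $k\equiv\pm(n't-m's)$ and $-1$ when $k\equiv\pm(n't+m's)$.

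It then remains to verify, for each $(n',m')\in\mathscr{D}(s,t)$, the two scalar identities obtained by evaluating the left-hand side at $k=n't-m's$ and at $k=n't+m's$ (the classes $-k$ following by evenness). Here I would expand the two sine factors of $\tilde{\chi}_{2st}^{(n,m)}(k)$ by the angle-addition formula, writing the arguments as $mn'\pi\pm mm's\pi/t$ and $(s-n)n't\pi/s\pm(s-n)m'\pi$, and then reduce $\sin\!\left((s-n)n't\pi/s\right)=(-1)^{n't+1}\sin\!\left(nn't\pi/s\right)$. Careful bookkeeping of the resulting powers of $-1$ — using that $2sm'$ and $2n't$ are even — collapses the sign prefactor to $(-1)^{nm'+mn'}$ at $k=n't-m's$ and to $-(-1)^{nm'+mn'}$ at $k=n't+m's$, matching $\chi_{2st}^{(n',m')}(k)=+1$, respectively $-1$. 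This proves \eqref{keyform} for all $k\in\mathbb{Z}$.

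The only genuinely nontrivial ingredient is the support identification in the second step: it is what forces the two sides to have the same zero set and reduces the problem to a single surviving term on the right, after which everything is trigonometric bookkeeping. One point to keep track of is that $\mathscr{D}(s,t)$ is defined by cases on the parities of $s$ and $t$, so Lemma~\ref{distinct} and the bijection in \eqref{bi} (via Lemma~\ref{scha}) must be invoked in the appropriate case; but in every case the count $2(s-1)(t-1)$ and the vanishing of $\tilde{\chi}_{2st}^{(n,m)}$ off the classes coprime to both $s$ and $t$ hold in the same way.
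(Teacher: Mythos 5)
Your proposal is correct and follows essentially the same route as the paper's proof: reduce $\tilde{\chi}_{2st}^{(n,m)}(k)$ to $\pm\sin(mk\pi/t)\sin(nk\pi/s)$, use Lemma~\ref{distinct} together with the inclusion--exclusion count $2st-2s-2t+2=2(s-1)(t-1)$ to identify the common support with the classes $\pm(n't\pm m's)$, verify the identity there by angle addition and sign bookkeeping, and check that both sides vanish when $s\mid k$ or $t\mid k$. The only cosmetic difference is that you keep the factor $(-1)^{\ell}$ and the argument $(s-n)\ell\pi/s$ explicit rather than first simplifying to $-\sin(m\ell\pi/t)\sin(n\ell\pi/s)$ as in \eqref{tilchiano}.
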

\begin{proof}
We assume that $nt - ms > 0$. A similar argument holds for $nt - ms < 0$. By \eqref{ftilde}, we have 
\begin{align} \label{tilchiano}
\tilde{\chi}_{s,t}^{(n,m)}(k)&=(-1)^k\sin\left(\dfrac{(nt+ms- |nt-ms|)k\pi}{2st}\right)\sin\left(\dfrac{(2st- |nt-ms| - (nt+ms))k\pi}{2st}\right) \nonumber \\   
& =(-1)^k\sin\left(\dfrac{mk\pi}{t}\right)\sin\left(\dfrac{(s-n)k\pi}{s}\right) \nonumber \\
& =-\sin\left(\frac{mk\pi}{t}\right)\sin\left(\frac{nk\pi}{s}\right).
\end{align}
Clearly, the support of $\tilde{\chi}_{s,t}^{(n,m)}$ is contained within the set of integers $k$ such that $s\nmid k$ and $t\nmid k$. By inclusion-exclusion, the total number of integers between $0$ and $2st-1$ which are either multiples of $s$ or $t$ is $2s+2t-2$. By Lemma \ref{distinct}, the number of distinct integers $k=\pm(m's\pm n't)$ between $0$ and $2st-1$, where $(n',m')\in \mathscr{D}(s, t)$ is $2(s-1)(t-1)$. Since $\tilde{\chi}_{s,t}^{(n,m)}$ has period $2st$, this implies that the support of $\tilde{\chi}_{s,t}^{(n,m)}$ is contained in the following set of distinct integers
\begin{equation}\label{intgen}
\mathscr{T} = \{ \pm(m's\pm n't)\pmod{2st} \, : \, (n',m')\in\mathscr{D}(s,t)\}.    
\end{equation}
Thus it suffices to prove \eqref{keyform} for integers in $\mathscr{T}$. From \eqref{tilchiano}, we have
\begin{align}\label{tilchigenlin}
\tilde{\chi}_{s,t}^{(n,m)}(\pm(m's\pm n't))&=-\sin\left(\frac{\pm m(m's\pm n't)\pi}{t}\right)\sin\left(\frac{\pm n(m's\pm n't)\pi}{s}\right)\notag\\
&=-\sin\left(\pm mn'\pi+ \frac{mm's}{t}\pi\right)\sin\left( nm'\pi\pm \frac{ nn't}{s}\pi\right) \notag \\
&=\pm(-1)^{mn'+nm'+1}\sin\left(\frac{mm's}{t}\pi\right)\sin\left(\frac{nn't}{s}\pi\right) \notag\\
&=(-1)^{mn'+nm'}\sin\left(\frac{mm's}{t}\pi\right)\sin\left(\frac{nn't}{s}\pi\right)\chi_{s,t}^{(n',m')}(\pm(m's\pm n't))
\end{align}
where in the last step we use that the sign $\pm$ depends on the sign of $(m's\pm n't)$, and it is $-\chi_{s,t}^{(n',m')}(\pm(m's\pm n't))$. Since the integers in $\mathscr{T}$ are distinct, then from \eqref{tilchigenlin} we find:
\begin{align}\label{classgen}
\tilde{\chi}_{s,t}^{(n,m)}(\pm(m's\pm n't))=-\sqrt{\dfrac{st}{8}}\sum_{(n'',\,m'')\in\mathscr{D}(s,t)}S_{n, \, m}^{n'',\,m''}\chi_{s,t}^{(n'',\,m'')}(\pm(m's\pm n't)).    
\end{align}
It follows from \eqref{classgen} that \eqref{keyform} is true for integers $k$ given by \eqref{intgen}, which are non-multiples of $s$ or $t$ and between $0$ and $2st-1$. It remains to show that \eqref{keyform} remains true when $k$ is a multiple of $s$ or $t$. Without loss of generality, we assume that $k$ is a multiple of $s$. Then \eqref{tilchiano} implies that $\tilde{\chi}_{s,t}^{(n,m)}(k)=0$. Thus, it suffices to show that $\chi_{s,t}^{(n',\,m')}(k)=0$ for all $(n', m')\in\mathscr{D}(s, t)$. To this end, we note that since $s$ and $t$ are coprime, $s\mid \pm (n't\pm m's)$ if and only if $s\mid n'$, which implies that $n'\ge s$, a contradiction. Thus, for $(n', m')\in\mathscr{D}(s, t)$, it follows $\pm(n't\pm m's)\not\equiv 0\pmod{s}$. Hence $k\neq \pm(n't\pm m's)$, and  $\chi_{s,t}^{(n',m')}(k)=0$.
\end{proof}

Finally, we recall two key properties for the non-holomorphic Eichler integral
\begin{equation*}
\widehat{\Phi}^{(n,m)}(z) := \sqrt{\frac{sti}{8\pi^2}} \int_{\bar{z}}^{+i \infty} \frac{\theta_{0,4st,\chi_{s,t}^{(n,m)}}^{(0)}(\tau)}{(\tau - z)^{3/2}} d\tau 
\end{equation*}
where $z \in \mathbb{H}^{-}$ \cite[Eqn.~(16)]{HK1}. For $z \in \mathbb{H}^{-}$ and $\alpha \in \mathbb{Q}$, consider the period function
\begin{equation*}
r^{(n,m)}(z; \alpha) := \sqrt{\frac{sti}{8\pi^2}} \int_{\alpha}^{+i \infty} \frac{\theta_{0,4st,\chi_{s,t}^{(n,m)}}^{(0)}(\tau)}{(\tau - z)^{3/2}} d\tau\,, 
\end{equation*}
as defined in \cite[Eqn.~(18)]{HK1}. Then, we have
\begin{equation} \label{HKlast1}
\widehat{\Phi}^{(n,m)}(z) + \left(\frac{1}{iz}\right)^{3/2} \sum_{(n',\,m')\in\mathscr{D}(s,t)} S_{n, m}^{n', \,m'} \widehat{\Phi}^{(n',\,m')}\left(-\frac{1}{z}\right) = r^{(n,m)}(z;0)
\end{equation}
and
\begin{equation} \label{HKlast2}
\widehat{\Phi}^{(n,m)}(\alpha) =  -\frac{1}{2} \theta_{0,4st,\chi_{s,t}^{(n,m)}}^{(1)}(\alpha)
\end{equation}
where \eqref{HKlast1} is \cite[Eqn.~(17)]{HK1} and \eqref{HKlast2} is \cite[below Eqn.~(19)]{HK1}. 

\begin{proof}[Proof of Theorem \ref{main2}]
By Proposition~\ref{gentor}, we have
\begin{equation}\label{eq:gentor}
\theta_{0, 4(2st)^2, \tilde{\chi}_{s,t}^{(n,m)}}^{(\nu)}(4st z) = -\sqrt{\dfrac{st}{8}} \sum_{(n',m')\in\mathscr{D}(s,t)} S_{n, m}^{n', m'} \theta_{0, 4st, \chi_{s,t}^{(n',m')}}^{(\nu)}(z).
\end{equation}
Also, \cite[Eqn.~(10)]{HK1} states
\begin{equation} \label{transgen}
\theta_{0,4st,\chi_{s,t}^{(n',m')}}^{(0)}(z) = \sqrt{\frac{i}{z}} \sum_{(n',\,m')\in\mathscr{D}(s,t)} S_{n, m}^{n', \,m'}\theta_{0,4st,\chi_{s,t}^{(n',m')}}^{(0)}\left(-\frac{1}{z} \right)\,.
\end{equation}
Applying Theorem~\ref{main} part~(3) and \eqref{eq:gentor} gives
\begin{multline} \label{compgen}
\displaystyle S_{\chi_{s,t}^{(n,m)}}^{\bf{med}} \left(-\frac{1}{2 \pi i \alpha} \right)  = -\frac{2e^{\pi i/4}}{ \pi (i \alpha)^{3/2}}\sqrt{\dfrac{st}{8}}  \int_{0}^{+ i\infty} \frac{\sum_{(n',m')\in\mathscr{D}(s,t)} S_{n, m}^{n', m'}\theta_{0, 4st, \chi_{s,t}^{(n',m')}}^{(0)}(p)}{\left( \frac{1}{\alpha} + p \right)^{3/2}} \, dp \\
 - \frac{1}{(i \alpha)^{3/2}}\sum_{(n',m')\in\mathscr{D}(s,t)} S_{n, m}^{n', m'}\theta_{0, 4st, \chi_{s,t}^{(n',m')}}^{(1)}\left(-\dfrac{1}{\alpha}\right).
 \end{multline}
Then, by \eqref{transgen}, we rewrite \eqref{compgen} as follows:
\begin{multline} \label{almost}
\displaystyle S_{\chi_{s,t}^{(n,m)}}^{\bf{med}} \left(-\frac{1}{2 \pi i \alpha} \right) = -\frac{2e^{\pi i/4}}{ \pi (i \alpha)^{3/2}}\sqrt{\dfrac{sti}{8}}  \int_{0}^{+ i \infty} \frac{\theta_{0, 4st, \chi_{s,t}^{(n,m)}}^{(0)}(-1/p)}{\sqrt{p} \left(\frac{1}{\alpha} + p \right)^{3/2}} \, dp \\
 - (i \alpha)^{-3/2} \sum_{(n',m')\in\mathscr{D}(s,t)} S_{n, m}^{n', m'}\theta_{0, 4st, \chi_{s,t}^{(n',m')}}^{(1)}\left(-\dfrac{1}{\alpha}\right).
\end{multline}
We now perform the change of coordinates $p \to -1/p$ to the integral in \eqref{almost} and simplify to obtain
\begin{align*}
\frac{2e^{\pi i/4}}{ \pi (i \alpha)^{3/2}}\sqrt{\dfrac{sti}{8}}\int_{0}^{+ i \infty} \frac{\theta_{0, 4st, \chi_{s,t}^{(n,m)}}^{(0)}(-1/p)}{\sqrt{p}\left(\frac{1}{\alpha} + p \right)^{3/2}} \, dp
=2\sqrt{\dfrac{sti}{8\pi^2}}\int_{0}^{+ i \infty} \frac{\theta_{0, 4st, \chi_{s,t}^{(n,m)}}^{(0)}(p)}{\left(p - \alpha \right)^{3/2}} \, dp\,.
\end{align*}
Then, going back to \eqref{almost} we find
\begin{equation} \label{lastgen}
\begin{aligned}
\displaystyle S_{\chi_{s,t}^{(n,m)}}^{\bf{med}} \left(-\frac{1}{2 \pi i \alpha} \right) &= -2\sqrt{\dfrac{sti}{8\pi^2}}\int_{0}^{+ i \infty} \frac{\theta_{0, 4st, \chi_{s,t}^{(n,m)}}^{(0)}(p)}{\left(p - \alpha \right)^{3/2}} \, dp \\
&\qquad - (i \alpha)^{-3/2} \sum_{(n',m')\in\mathscr{D}(s,t)} S_{n, m}^{n', m'}\theta_{0, 4st, \chi_{s,t}^{(n',m')}}^{(1)}\left(-\dfrac{1}{\alpha}\right) \\
& = \theta_{0, 4st, \chi_{s,t}^{(n,m)}}^{(1)}(\alpha)
\end{aligned}
\end{equation}
where the last line of (\ref{lastgen}) follows from \eqref{HKlast1} and \eqref{HKlast2}. This proves the result.
\end{proof}

\begin{proof}[Proof of Corollary \ref{cor}]
Let $u \in \mathbb{Z}_{\geq 1}$. For $0 \leq \ell \leq u-1$, define
\begin{eqnarray} \label{xm}
X_u^{(\ell)}(q):=\sum_{k_1,k_2,\dotsc,k_u=0}^\infty (q)_{k_u} q^{k_1^2+\cdots+k_{u-1}^2+k_{\ell+1}+\cdots+k_{u-1}}\prod_{i=1}^{u-1}\begin{bmatrix} k_{i+1} + \delta_{i,\ell} \\ k_{i} \end{bmatrix}
\end{eqnarray}
where $\delta_{i,\ell}$ is the characteristic function; see~\cite[Eqn.~(11)]{hikami2}. The expression $X_u^{(\ell)}(q)$ matches the $N$th colored Jones polynomial for $T(2, 2u+1)$ when $\ell = 0$ and $q=\zeta_N$ and is an element of $\mathcal{H}$; see \cite[Proposition 16]{hikami2}. Take $s=2$, $t=2u+1$, $n=1$ and $m=\ell + 1$ in \eqref{pf}. Then Hikami's strange identity~\cite[Eqn. (15)]{hikami2}\footnote{Taking $u=1$ and $\ell = 0$ in (\ref{xm}) and (\ref{strangehikami}) yields (\ref{strid}).} reads
\begin{eqnarray}\label{strangehikami}
X_u^{(\ell)}(q)``=" -\frac{1}{2}\theta^{(1)}_{(2u-2\ell-1)^2, 2(8u+4), \chi_{2, 2u+1}^{(1, \ell +1)}}(q).
\end{eqnarray} 
Claims~(1) and~(2) of Conjecture~\ref{cgconj} follow from Theorem~\ref{main} parts~(1) and~(2), namely the formal series $\mathcal{F}_{\chi_{2, 2u+1}^{(1, \ell + 1)}}(x)$ has a resurgent Borel transform and its median resummation is an analytic function for $\Re(x)>0$ with radial limits at points $\frac{1}{2\pi i}\mathbb{Q}$. In particular, this is true for \[F_{T(2,2u+1)}(x) = \mathcal{F}_{\chi_{2, 2u+1}^{(1,1)}}(x)\,.\] Then, part~(3) of Conjecture~\ref{cgconj} follows by Theorem~\ref{main2} applying \eqref{strangehikami} for $\ell=0$. In fact, we can prove a stronger result:
\begin{equation}
S^{\bf med}_{\chi_{2, 2u+1}^{(1, \ell + 1)}}\left(-\frac{1}{2\pi i\alpha}\right) \stackrel{.}{=} \mathcal{F}_{\chi_{2, 2u+1}^{(1,\ell+1)}}(e^{2\pi i \alpha})
\end{equation}
for every $0\neq\alpha\in\mathbb{Q}$ and $0\leq\ell\leq u-1$.

Let $k\in\mathbb{Z}_{\geq 1}$ and $\mathfrak{F}_k(q)$ denote the element in $\mathcal{H}$ that matches the $N$th colored Jones polynomial for $T(3, 2^k)$ at a root of unity $q=e^{\frac{2\pi i}{N}}$ (see~\cite[Eqn.~(1.8)]{Be} for an explicit $q$-hypergeometric expression). Choose $s=3$, $t=2^k$, $n=2$ and $m=1$ in \eqref{pf}. Then the strange identity 
\begin{equation} \label{str2}
\mathfrak{F}_k(q)``=" -\frac{1}{2}\theta^{(1)}_{(2^{k+1} - 3)^2, 3\cdot 2^{k+2}, \chi_{3, 2^{k}}^{(2,1)}}(q) 
\end{equation}
was proved in~\cite[Theorem~2.4]{Be}\footnote{Taking $k=1$ in (\ref{str2}) gives (\ref{strid}).}. 
Claims~(1) and~(2) of Conjecture~\ref{cgconj} follow from Theorem~\ref{main} parts~(1) and~(2), namely the formal series \[F_{T(3,2^k)}(x) = \mathcal{F}_{\chi_{3, 2^{k}}^{(2,1)}}(x)\] has a resurgent Borel transform and its median resummation is an analytic function for $\Re(x)>0$ with radial limits at points $\frac{1}{2\pi i}\mathbb{Q}$. Then, part~(3) of Conjecture~\ref{cgconj} follows by Theorem~\ref{main2} applying \eqref{str2}.
\end{proof}

\section{Concluding Remarks}\label{sec:remarks}
In order to apply Theorems~\ref{main} and~\ref{main2} and thus verify Conjecture~\ref{cgconj} for all torus knots $T(s,t)$, one needs to prove the relevant strange identity in \eqref{Hstrange}. The first obstruction in this task is finding an explicit ``non-cyclotomic" expansion for $J_N(T(s,t); q)$ from which an element in $\mathcal{H}$ such as the one in \eqref{xm} can be extracted. The Rosso-Jones formula for $J_N(T(s,t); q)$ does not appear to be sufficient~\cite[page 132]{morton}. Instead, one should consider the walks along braids method in~\cite{armond, bs}. The second obstruction is in determining the underlying $q$-series identity which implies the strange identity. Thus, it would be of substantial interest to further develop the techniques in~\cite{lovejoy}. Finally, what can one say about Conjecture~\ref{cgconj} for satellite or hyperbolic knots?

\section*{Acknowledgements}
The first author thanks Dan Zhang, Nick Dorey and Phil Trinh for many interesting discussions. He was supported by EPSRC grant EP/V012479/1 as a postdoc at the University of Bath while part of this work was completed. The fourth author was partially supported by Enterprise Ireland CS20212030 and the Irish Research Council Advanced Laureate Award IRCLA/2023/1934. He would like to thank the Okinawa Institute of Science and Technology for their hospitality and support during his visit from June 9 to September 1, 2022 as part of their Theoretical Sciences Visiting Program. In particular, the first and fourth authors are very grateful to Professor Reiko Toriumi (OIST) for her firm instance for us to converse after the first author's talk at OIST on August 23, 2022. The fourth author would also like to thank the Max-Planck-Institut f\"ur Mathematik for their hospitality and support during the completion of this paper. The fifth author has been supported by the Huawei Young Talents Program at IH\'ES. Finally, the authors thank David Sauzin for his helpful comments on median resummation and the referees for their critiques which substantially improved the paper.

\end{document}